%% file: main.tex
\documentclass[11pt,a4paper]{article}
\usepackage[T1]{fontenc}
\usepackage{lmodern,amsmath,amssymb,amsthm,microtype}
\usepackage[margin=28mm]{geometry}
\usepackage{tikz,float}
\usepackage[font=small]{caption}
\usepackage[hidelinks]{hyperref}
\newtheorem{theorem}{Theorem}
\newtheorem{lemma}[theorem]{Lemma}
\newtheorem{proposition}[theorem]{Proposition}
\newtheorem{corollary}[theorem]{Corollary}
\newtheoremstyle{compactdefinition}
  {5pt}{5pt}{\normalfont}{}{\bfseries}{.}{.5em}{}
\theoremstyle{compactdefinition}
\newtheorem{definition}{Definition}
\theoremstyle{remark}
\newtheorem*{remark}{Remark}
\theoremstyle{plain}
\newcommand{\pan}{\chi_P}
\newcommand{\bipan}{\chi_B}
\newcommand{\targetpan}{p}
\newcommand{\targetbipan}{b}
\newcommand{\targetalpha}{a}
\newcommand{\ceil}[1]{\left\lceil #1\right\rceil}
\newcommand{\floor}[1]{\left\lfloor #1\right\rfloor}
\input{figures}
\title{Counterexamples to a conjecture and improved bounds for bipanchromatic colorings}
\author{%
  \begin{tabular}[t]{@{}c@{}}
    Stefan Novakovi\'c\\
    {\small Faculty of Electrical Engineering}\\
    {\small University of Banja Luka}\\
    {\small\href{mailto:stefan.novakovic@etf.unibl.org}{stefan.novakovic@etf.unibl.org}}
  \end{tabular}
  \qquad
  \begin{tabular}[t]{@{}c@{}}
    Du\v{s}ko Joji\'c\\
    {\small Faculty of Natural Sciences and Mathematics}\\
    {\small University of Banja Luka}\\
    {\small\href{mailto:dusko.jojic@pmf.unibl.org}{dusko.jojic@pmf.unibl.org}}
  \end{tabular}}
\date{}
\hypersetup{pdftitle={Counterexamples to a conjecture and improved bounds for bipanchromatic colorings},
  pdfauthor={Stefan Novakovi\'c, Du\v{s}ko Joji\'c}}

\makeatletter
\renewcommand{\@maketitle}{%
  \begin{center}
    {\LARGE\@title\par}
    \ifx\@author\@empty\else
      \medskip{\large\begin{tabular}[t]{c}\@author\end{tabular}\par}
    \fi
    \ifx\@date\@empty\else\medskip{\large\@date\par}\fi
  \end{center}}
\makeatother
\begin{document}
\maketitle
\input{abstract}
\input{introduction}

\input{preliminaries}
\input{counterexamples}
\input{bounds}

\input{nondetermination}
\input{decidability}
\input{improved-bounds}
\section*{AI Assistance}
ChatGPT was used to assist with proof development. All AI-assisted arguments were reviewed and revised by the authors, who take full responsibility for the final manuscript.

\input{references}
\end{document}

%% file: figures.tex
\newcommand{\ParameterConstructionDrawing}{%
\begin{tikzpicture}[font=\footnotesize,
  pv/.style={circle,draw=black!75,line width=.45pt,
    minimum size=4.6mm,inner sep=0pt,font=\scriptsize},
  pe/.style={draw=black!55,line width=.5pt,rounded corners=2pt},
  pair/.style={draw=black!35,line width=.8pt},
  elabel/.style={anchor=west,inner sep=1pt,font=\scriptsize}]
  \foreach \view/\shift in {0/0,1/4.95,2/9.90}{
    \begin{scope}[xshift=\shift cm]
      \fill[black!5,rounded corners=3pt] (-1.13,-1.02) rectangle (.33,1.02);
      \node[font=\scriptsize] at (-1.30,0) {$S$};
      \draw[pe] (-1.60,-1.25) -- (.70,-1.25) -- (.70,1.28)
        -- (1.82,1.28) -- (1.82,2.25) -- (-1.60,2.25) -- cycle;
      \draw[pe,dash pattern=on 4pt off 2pt] (-1.70,1.17) -- (.80,1.17)
        -- (.80,-1.28) -- (1.90,-1.28) -- (1.90,-2.13)
        -- (-1.70,-2.13) -- cycle;
      \ifnum\view>0\relax
        \draw[pe,dash pattern=on 2pt off 1pt] (-1.50,-1.18) -- (.60,-1.18)
          -- (.60,.25) -- (1.82,.25) -- (1.82,1.05) -- (.60,1.05)
          -- (.60,2.16) -- (-1.50,2.16) -- cycle;
        \draw[pe,densely dotted] (-1.40,-1.11) -- (.50,-1.11)
          -- (.50,-.80) -- (1.82,-.80) -- (1.82,0) -- (.50,0)
          -- (.50,2.07) -- (-1.40,2.07) -- cycle;
        \node[elabel] at (1.98,.65) {$e_1$};
        \node[elabel] at (1.98,-.40) {$e_2$};
      \fi
      \node[elabel] at (1.98,1.75) {$e_0$};
      \node[elabel] at (1.98,-1.70) {$e_3$};
      \ifnum\view=2\relax
        \draw[pair] (-.45,1.65) -- (-1.95,1.65)
          -- (-1.95,-1.70) -- (-.45,-1.70);
        \draw[pair] (-.80,.72) -- (1.45,1.65);
        \draw[pair] (0,.72) -- (1.45,.65);
        \draw[pair] (0,0) -- (1.45,-.40);
        \draw[pair] (0,-.72) -- (1.45,-1.70);
        \draw[pair] (-.80,0) -- (-.80,-.72);
      \fi
      \foreach \idx/\px/\py/\col in
        {1/-.80/.72/blue!25,2/0/.72/orange!35,
         3/-.80/0/green!30,4/0/0/teal!25,
         5/-.80/-.72/yellow!35,6/0/-.72/magenta!25}{
        \ifnum\view=2\relax
          \ifnum\idx=5\relax\def\col{green!30}\fi
        \fi
        \node[pv,fill=\col] at (\px,\py) {$s_{\idx}$};
      }
      \node[pv,fill=violet!25] at (-.45,1.75) {$u$};
      \node[pv,fill=violet!25] at (-.45,-1.70) {$w$};
      \ifnum\view=2\relax
        \node[pv,fill=blue!25] at (1.45,1.75) {$x_0$};
        \node[pv,fill=orange!35] at (1.45,.65) {$x_1$};
        \node[pv,fill=teal!25] at (1.45,-.40) {$x_2$};
        \node[pv,fill=magenta!25] at (1.45,-1.70) {$x_3$};
      \else
        \node[pv,fill=black!20] at (1.45,1.75) {$x_0$};
        \node[pv,fill=black!20] at (1.45,-1.70) {$x_3$};
        \ifnum\view=1\relax
          \node[pv,fill=black!20] at (1.45,.65) {$x_1$};
          \node[pv,fill=black!20] at (1.45,-.40) {$x_2$};
        \fi
      \fi
      \ifcase\view
        \node[align=center] at (.15,2.85) {(a) Starting hypergraph};
        \node at (.15,-2.50) {$10$ vertices, $2$ hyperedges};
      \or
        \node[align=center] at (.15,2.85) {(b) Panchromatic $8$-coloring};
        \node at (.15,-2.50) {$12$ vertices, $4$ hyperedges};
      \or
        \node[align=center] at (.15,2.85) {(c) Bipanchromatic $6$-coloring};
        \node at (.15,-2.50) {$6$ pairs, each with its own color};
      \fi
    \end{scope}
  }
\end{tikzpicture}%
}

\newcommand{\GeneralSunflowerDrawing}{%
\begin{tikzpicture}[font=\footnotesize,
  v/.style={circle,draw,minimum size=4.7mm,inner sep=.2pt,font=\scriptsize}]
  \foreach \view/\shift in {0/0,1/5.15,2/10.3}{
    \begin{scope}[xshift=\shift cm,scale=.52]
      \foreach \idx/\angle/\pattern in
        {1/90/solid,2/210/densely dashed,n/330/densely dotted}{
        \begin{scope}[rotate=\angle]
          \path[draw=black!60,line width=.55pt,\pattern,
            fill=black!4,fill opacity=.3]
            (1,0) ellipse [x radius=2.7cm,y radius=2.1cm];
        \end{scope}
        \node[fill=white,inner sep=.5pt] at (\angle:4.15) {$e_{\idx}$};
      }
      \ifnum\view=1\relax
        \foreach \idx/\cx/\cy/\col in
          {1/-.999/.325/blue!25,2/0/1.05/blue!25,
           3/-.617/-.849/green!40,4/.617/-.849/green!40,n/.999/.325/blue!25}{
          \node[v,fill=\col] at (\cx,\cy) {$s_{\idx}$};
        }
        \foreach \dotangle in {328,344,360}{
          \fill (\dotangle:1.45) circle [radius=.9pt];
        }
      \else
        \foreach \idx/\cx/\cy/\col in
          {1/0/.60/blue!25,2/-.60/-.25/orange!35,n/.60/-.25/teal!25}{
          \node[v,fill=\col] at (\cx,\cy) {$s_{\idx}$};
        }
        \begin{scope}[yshift=-.35cm]
          \foreach \dotangle in {235,252.5,270,287.5,305}{
            \fill (\dotangle:.85) circle [radius=.9pt];
          }
        \end{scope}
      \fi
      \foreach \idx/\angle/\col in
        {1/90/blue!25,2/210/orange!35,n/330/teal!25}{
        \ifnum\view<2\relax
          \node[v,fill=violet!25] at (\angle:3.1) {$p_{\idx}$};
        \else
          \node[v,fill=\col] at (\angle:3.1) {$p_{\idx}$};
        \fi
      }
      \foreach \dotangle in {250,260,...,290}{
        \fill (\dotangle:3.1) circle [radius=.9pt];
      }
      \ifcase\view
        \node[align=center] at (0,5.35) {(a) $\pan$-coloring};
        \node[align=center] at (0,-4.15)
          {$\pan=n+1$, $\alpha_{n+1}=n$};
      \or
        \node[align=center] at (0,5.35) {(b) Merging-based};
        \node at (0,-4.15) {$1+\floor{\frac n2}\text{ colors}$};
      \or
        \node[align=center] at (0,5.35) {(c) Bipanchromatic $\bipan$-coloring};
        \node at (0,-4.15) {$\bipan=n$};
      \fi
    \end{scope}
  }
\end{tikzpicture}%
}

%% file: abstract.tex
\begin{abstract}
We study the bipanchromatic number $\chi_B$, introduced by Lalou et al.~\cite{LMST} as the maximum number of colors in a panchromatic hypergraph coloring where every color appears at least twice. They show that merging uniquely occurring colors in a maximum panchromatic coloring gives the lower bound $\beta=\chi_P-\lceil\alpha/2\rceil \leq \chi_B$, where $\chi_P$ is the panchromatic number and $\alpha$ is the minimum number of such colors among maximum panchromatic colorings. They conjectured that $\chi_B=\beta$ and subsequently disproved it~\cite{LMSTChordal}. 
We identify the smallest counterexample, with six vertices and three hyperedges, both minimum possible. 
We show that the ratio bound $\chi_B/\beta<2$ is asymptotically sharp and prove minimality results for the order and hyperedge count.
For $3\le\alpha<\chi_P$ we show that every integer from $\beta$ to $\chi_P-1$ occurs as $\chi_B$ while $\chi_P$ and $\alpha$ are held fixed.
Furthermore, we discover that the values $|V|,\chi_P,\alpha$ directly decide whether the conjectured equality holds except when $\alpha=4$, where a matching reduction resolves the ambiguity. 
We derive the optimal universal lower bound $L$ depending only on $|V|,\chi_P,\alpha$ in the nontrivial regime $0<2\alpha<|V|$. Finally, we prove $\chi_B/L\le5/4$ and $\chi_B/L\le6/5+1/(5L)$, with $5/4$ sharp and $6/5$ asymptotically sharp as $L\to\infty$.
\end{abstract}

%% file: introduction.tex
\section{Introduction}

A \emph{panchromatic} coloring~\cite{LMST,Lovasz1970,BPRS2013,Cherkashin2018} uses every color in every
hyperedge, with maximum $\pan$ colors.
Lalou et al.~\cite{LMST} introduced bipanchromatic colorings
to study completely independent spanning trees in split graphs.
These colorings use each color at least twice
overall, with maximum $\bipan$.
Let $\alpha$ be the minimum number of colors occurring once in a $\pan$-coloring.
Their conjecture~\cite[Conjecture~1]{LMST} predicts
$\bipan=\pan-\ceil{\alpha/2}$ by merging those colors.

The same authors subsequently disproved this conjecture~\cite{LMSTChordal}. They construct counterexamples as 
three-hyperedge sunflowers with a common set of $6d$ vertices
and three disjoint petals of size $q$, where $d\ge1$ and $q\ge2d$.
Their parameters are $\pan=6d+q$, $\alpha=6d$, and $\bipan=4d+q$.
Thus the conjectured value is exceeded by exactly $d$, on
$6d+3q\ge12d$ vertices. 

Our three-hyperedge construction uses $12d-6$ vertices for the
same excess (Proposition~\ref{prop:three-edge-excess}). Without
restricting the number of hyperedges, we prove that the minimum order
is $4d+2$ and classify all examples attaining it
(Theorem~\ref{thm:vertex-minimum}). Their family has actual-to-conjectured ratio at most $\frac65$. We determine the sharp worst-case ratio over all hypergraphs, proving that it is strictly below two and can approach two (Theorem~\ref{thm:sharp-ratio}). We develop the coloring bounds and
equality criteria suggested in their concluding
remarks~\cite[Section~5]{LMSTChordal}.

Sections~\ref{sec:preliminaries}--\ref{sec:counterexample}
introduce the colorings and counterexamples.
Section~\ref{sec:merging}
establishes sharp bounds and characterizes minimum-size counterexamples.
Section~\ref{sec:two-parameters} determines, with minimum order, all possible $\bipan$
values with fixed $\pan$ and $\alpha$, showing that
no formula depending
only on $\pan$ and $\alpha$ can determine $\bipan$
for every hypergraph. Section~\ref{sec:zero-excess} shows that the hypergraph order,
$\pan$ and $\alpha$ determine if the conjectured equality
holds, except when $\alpha=4$ which is resolved by maximum matching. In Section~\ref{sec:vca-bounds}, the optimal lower bound $L$ for $\bipan$ is determined from $|V|$, $\pan$ and $\alpha$, with
$\bipan/L\le5/4$ and $\bipan/L\le6/5+1/(5L)$,
where $5/4$ is sharp and $6/5$ is asymptotically sharp as $L\to\infty$.

%% file: preliminaries.tex
\section{Preliminaries}\label{sec:preliminaries}

A hypergraph $H=(V,\mathcal E)$ consists of a non-empty set
$V=V(H)$ (the vertices of $H$) and a hyperedge set $E(H)=\mathcal
E \subset \mathcal{P}(V)$. If every hyperedge of $H$ contains
exactly $r$ vertices, $H$ is $r$-uniform. A proper coloring of a
hypergraph assigns colors to its vertices so that no hyperedge of
size at least two is monochromatic.

\begin{definition}[Panchromatic colors]\label{def:panchromatic}
A \emph{panchromatic $k$-coloring} of $H$ assigns $k$ colors to
its vertices so that each hyperedge contains at least one vertex
of each color. The maximum such $k$ is the \emph{panchromatic
number} $\pan(H)$.
\end{definition}

\begin{definition}[Minimum unique-color count]\label{def:unique-count}
A color is \emph{unique} in a panchromatic coloring of $H$ if it
is used on exactly one vertex. For every integer $k\in[\pan(H)]$,
let $\alpha_k(H)$ denote the minimum number of vertices with
unique colors among all panchromatic $k$-colorings of $H$. By
convention, $\alpha_0(H)=0$.
\end{definition}

\begingroup
\setlength{\abovedisplayskip}{6pt plus 2pt minus 2pt}
\setlength{\belowdisplayskip}{6pt plus 2pt minus 2pt}
\setlength{\abovedisplayshortskip}{0pt plus 2pt}
\setlength{\belowdisplayshortskip}{3pt plus 2pt minus 1pt}
\setlength{\intextsep}{4pt plus 2pt minus 1pt}
\setlength{\abovecaptionskip}{4pt}
\begin{definition}[Bipanchromatic colors]\label{def:bipanchromatic}
A panchromatic $k$-coloring is \emph{bipanchromatic}~\cite{LMST}
if no color is unique. The \emph{bipanchromatic number} $\bipan(H)
$ of $H$ is the maximum $k$ such that $H$ admits a bipanchromatic
coloring. \footnote{Lalou et al.~\cite{LMST} use $\chi_p$ and
$\chi_p^2$. We write $\pan$ and $\bipan$, respectively.}
\end{definition}
Observe that $\bipan(H) = \max\{\,k\in \mathbb{N}:\alpha_k=0\,\}$.
Since every color appears on at least two vertices, it follows
that $\bipan(H)\leq\floor{|V|/2}$.

When the hypergraph $H$ is implied, we omit the argument $H$ and
write $\pan$, $\bipan$, and $\alpha_k$. Write
$\alpha=\alpha_{\pan}$. Lalou et al.~\cite[Conjecture~1]{LMST}
conjectured that
\begin{equation}\label{eq:conjecture}
 \bipan=\pan-\ceil{\frac{\alpha}{2}}.
\end{equation}
This equality was disproved in~\cite[Proposition~1]{LMSTChordal}.
For $|V|\ge2$, choose a panchromatic $\pan$-coloring with $\alpha$
unique colors. Merge the unique colors in pairs. If one unique color remains, merge it with any other color. This results in a bipanchromatic coloring with $\pan-\ceil{\alpha/2}$ colors. Thus,
$\pan-\ceil{\alpha/2}$ is a lower bound for $\bipan$. The equality
in~\cite[Conjecture~1]{LMST} would mean that this merging
procedure is optimal. The examples in Section~\ref{sec:counterexample}
have bipanchromatic colorings with more colors than this procedure produces. We define
\emph{excess} $ \epsilon=\bipan-
 \left(\pan-\ceil{\alpha/2}\right)$
\label{par:pairing-motivation} which measures the difference
between $\bipan$ and the lower bound in~(\ref{eq:conjecture}).
\endgroup

%% file: counterexamples.tex
\section{Counterexamples}\label{sec:counterexample}
The \emph{sunflower} $\mathcal{S}_n=(V,\mathcal{E})$ is an
$(n+1)$-uniform hypergraph with $2n$ vertices and $n$ hyperedges.
The set of vertices is $V=S\sqcup P$, where $S=\{s_1,\ldots,s_n\}$
and $P=\{p_1,\ldots,p_n\}$ are disjoint, and its hyperedges are
$e_i=S\cup\{p_i\}$ for all $i\in[n]$.

\begin{theorem}[Sunflower counterexamples]
\label{thm:sunflower-parameters}
For $\mathcal{S}_n$ with $n\ge3$,
\begin{equation}\label{eq:sunflower-parameters}
 \pan=n+1,\qquad \alpha=n,\qquad \bipan=n.
\end{equation}
Consequently, $\epsilon=\ceil{\frac n2}-1\ge1$, so every
$\mathcal{S}_n$ is a counterexample to~\cite[Conjecture~1]{LMST}.
In particular, for every $n\in\mathbb N$, $\mathcal{S}_{2n+1}$ has
$4n+2$ vertices, $2n+1$ hyperedges, and
$\epsilon(\mathcal{S}_{2n+1})=n$.
\end{theorem}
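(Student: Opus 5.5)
We prove the three equalities in~(\ref{eq:sunflower-parameters}) in turn and then derive $\epsilon$ by arithmetic. Throughout, write $S$ for the common core and $p_i$ for the petals.

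\textbf{Step 1: $\pan=n+1$.} For the lower bound, give $s_1,\dots,s_n$ the distinct colors $1,\dots,n$ and every petal $p_i$ color $n+1$. Each hyperedge $e_i=S\cup\{p_i\}$ then sees all $n+1$ colors. For the upper bound, each hyperedge has exactly $n+1$ vertices, so no panchromatic coloring can use more than $n+1$ colors.

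\textbf{Step 2: $\alpha=n$.} Fix a panchromatic $(n+1)$-coloring. Since $e_1$ has exactly $n+1$ vertices, they all get distinct colors. In particular $S$ is rainbow, with $n$ colors, and $p_i$ carries the one color missing from $S$. This missing color is the same for every $i$, so all petals share a single color $c$, which occurs $n\ge 3$ times. Each color on $S$ appears on exactly one vertex of $S$ and on no petal, so it is unique. Hence every panchromatic $(n+1)$-coloring has exactly $n$ unique colors, and $\alpha=n$. This rigidity argument is the crux of the proof.

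\textbf{Step 3: $\bipan=n$.} The upper bound is immediate: $\bipan\le\pan=n+1$, and Step~2 shows that no $(n+1)$-coloring is bipanchromatic, since it has $n\geq 1$ unique colors. So $\bipan\le n$. For the lower bound, use $n$ colors: give $s_i$ color $i$ and $p_i$ color $i$. Each $e_i$ contains all of $S$, hence all $n$ colors. Each color $i$ appears on exactly the two vertices $s_i$ and $p_i$, so no color is unique. This coloring is shown in Figure~(c).

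\textbf{Conclusion.} Substituting into the definition of excess gives $\epsilon=n-(n+1-\ceil{n/2})=\ceil{n/2}-1$. For $n\ge3$ this is at least $1$, so $\mathcal S_n$ is a counterexample to~\cite[Conjecture~1]{LMST}. For the final clause, take $\mathcal S_{2n+1}$ with $n\ge1$, so that $2n+1\ge3$ and the formula applies. It has $4n+2$ vertices and $2n+1$ hyperedges, and $\epsilon=\ceil{(2n+1)/2}-1=n$.
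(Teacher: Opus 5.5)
Your proof is correct and follows the paper's argument essentially step for step: an explicit $(n+1)$-coloring, then the rigidity of maximum colorings (forcing $S$ to be rainbow and all petals to share one color, so $\alpha=n$), then the pairing $\{s_i,p_i\}$ for the lower bound on $\bipan$. The only minor difference is that you obtain $\bipan\le n$ from $\bipan\le\pan-1$ (no $(n+1)$-coloring is bipanchromatic since $\alpha>0$), whereas the paper uses $\bipan\le\floor{|V|/2}=n$; both are immediate.
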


\begin{figure}[H]
\centering
\GeneralSunflowerDrawing
\caption{$\mathcal{S}_n$ colorings. Hyperedges are $e_i=S\cup\{p_i\}$.
(a) Unique colors on $S$ and one color on $P$ give $n+1$ colors.
(b) Merging on $S$ gives $1+\floor{\frac n2}$ colors.
(c) Each $\{s_i,p_i\}$ has its own color, giving $n$ colors.}
\label{fig:color-family}
\end{figure}

\begin{proof}
We can color each $s_i$ by $i$, and all $p_i$ by $n+1$. This is a
panchromatic coloring (Figure~\ref{fig:color-family}(a)), and
$\pan=n+1$. Any panchromatic $(n+1)$-coloring of $\mathcal{S}_n$
uses each color only once per hyperedge $S\sqcup \{p_i\}$. The
colors on $S$ have to be unique, thus $\alpha=|S|=n$ and every
$p_i$ has the same color. Merging the unique colors gives
$n+1-\ceil{\frac n2}$ colors (Figure~\ref{fig:color-family}(b)).
However, we can color each $\{s_i,p_i\}$ distinctly
(Figure~\ref{fig:color-family}(c)). Each hyperedge then contains
$n$ colors, none of which is unique. Notice that
$\bipan\leq|V|/2=n$, thus $\bipan=n$ and $\epsilon=\ceil{\frac
n2}-1$.
\end{proof}
Thus, the gap between $\bipan$ and the quantity in
\cite[Conjecture~1]{LMST} is unbounded. In particular, adding any
fixed constant to the right-hand side cannot make the conjecture
valid in general.

Figure~\ref{fig:color-family}(a) shows a $\pan$-coloring of $\mathcal{S}_n$ attaining $\alpha$. Figure~\ref{fig:color-family}(b) shows a bipanchromatic coloring obtained by merging unique colors. Figure~\ref{fig:color-family}(c) shows a bipanchromatic coloring with more colors.

For excess $n$, the three-hyperedge construction
in~\cite[Proposition~1]{LMSTChordal} uses at least $12n$ vertices.
The following construction uses $12n-6$. It is obtained by
enlarging $\mathcal S_3$. Replace
each vertex by a group of $2n-1$ vertices and each hyperedge by
the union of the groups corresponding to its vertices.

\begin{proposition}[Three-hyperedge counterexamples]\label{prop:three-edge-excess}
For $n\in\mathbb N$, let $S,P_1,P_2,P_3$ be pairwise disjoint sets
with $|S|=6n-3$ and $|P_i|=2n-1$. Let $V= S\sqcup P_1 \sqcup P_2
\sqcup P_3$, $e_i={S\sqcup P_i}$, and $\Delta_n= (V,
\{e_1,e_2,e_3\})$. The excess $\epsilon(\Delta_n)$ is equal to
$n$.

\end{proposition}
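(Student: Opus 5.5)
The plan is to compute the three parameters $\pan(\Delta_n)$, $\alpha(\Delta_n)$ and $\bipan(\Delta_n)$ directly, imitating the proof of Theorem~\ref{thm:sunflower-parameters}, and then subtract. Throughout write $m=2n-1$, so $|S|=3m$, $|P_i|=m$ and $|V|=6m=12n-6$.

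\emph{Step 1: $\pan=8n-4$.} Fix a panchromatic $k$-coloring and let $A$ be the set of colors that appear on $S$. Every color outside $A$ must still occur in each $e_i$, hence in each $P_i$. This gives $k-|A|\le|P_i|=m$, and therefore $k\le|S|+m=4m=8n-4$. For the matching lower bound, color $S$ with $3m$ distinct colors and color each $P_i$ bijectively with the same $m$ further colors. Each $e_i=S\sqcup P_i$ then sees all $4m$ colors.

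\emph{Step 2: $\alpha=6n-3$.} In any panchromatic $4m$-coloring both inequalities in Step 1 are tight, so $|A|=3m$. Since $|S|=3m$, the vertices of $S$ receive pairwise distinct colors. Each $P_i$ must carry all $m$ colors outside $A$, and it has exactly $m$ vertices, so $P_i$ uses no color of $A$. Hence every color of $A$ occurs exactly once in $V$. The remaining $m$ colors occur three times each, once in each petal. Every $\pan$-coloring therefore has exactly $3m$ unique colors, which gives $\alpha=3m=6n-3$. The merging bound is then
\[
\pan-\ceil{\alpha/2}=(8n-4)-(3n-1)=5n-3.
\]

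\emph{Step 3: $\bipan=6n-3$.} The upper bound is $\bipan\le\floor{|V|/2}=6n-3$ from the Preliminaries. For the construction, copy Figure~\ref{fig:color-family}(c) on the blown-up $\mathcal S_3$. Partition $S=S_1\sqcup S_2\sqcup S_3$ with $|S_i|=m$ and fix bijections $S_i\to P_i$. Give each matched pair $\{s,p\}$ its own color, which yields $3m=6n-3$ colors, each used exactly twice. Every color occurs in $S\subseteq e_i$, so the coloring is panchromatic, and it has no unique color. Thus $\epsilon(\Delta_n)=(6n-3)-(5n-3)=n$.

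The only step needing care is the rigidity argument in Step 2. It must show that every maximum panchromatic coloring forces all of $S$ to be unique, and not merely that one such coloring exists. The counting $|A|=3m=|S|$ together with $|P_i|=m$ handles this directly, so I expect no real obstacle.
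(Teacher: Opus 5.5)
Your proof is correct and follows the paper's argument. The paper gets $\chi_P=|e_i|=8n-4$ from the trivial bound $\chi_P\le|e_i|$, which is equivalent to your count $k\le|A|+m$. It deduces $\alpha=|S|$ from the fact that a maximum coloring uses each color exactly once per hyperedge, and it realizes $\chi_B=|S|$ by pairing each vertex of $S$ with one vertex of $P_1\cup P_2\cup P_3$, which is your blown-up version of Figure~\ref{fig:color-family}(c). Your rigidity argument for $\alpha$ and your explicit use of $\chi_B\le\lfloor|V|/2\rfloor$ spell out steps the paper leaves implicit.
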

\begin{proof}
Color the vertices of $S$ with distinct colors. Using $|P_i|$ additional distinct colors, color the vertices of each $P_i$ with distinct colors. This is
panchromatic, so $\pan=|e_i|=8n-4$. Any $\pan$-coloring of
$\Delta_n$ uses each color only once per hyperedge $S\sqcup P_i$,
so only the colors on $S$ are unique, thus $\alpha_{\pan}=|S|$.
Now color the vertices of $S$ with distinct colors and repeat each
such color on exactly one vertex in $P_1 \cup P_2 \cup P_3$. This
gives $\bipan=|S|$. Thus,
$\epsilon(\Delta_n)=(6n-3)-\left(8n-4-\ceil{\frac{6n-3}{2}}\right)=n$.
\end{proof}

%% file: bounds.tex
\section{Bounds and minimality}\label{sec:merging}
Consider an arbitrary hypergraph $H$ with panchromatic number
$\pan$. For any integer $k\leq \pan$, let
$\beta_k=k-\ceil{\frac{\alpha_k}{2}}$. In particular, let
$\beta=\beta_{\pan}$. \cite[Conjecture~1]{LMST} asserts that
$\bipan=\beta$, but this does not hold in general. However, for
any $k$-panchromatic coloring of $H$ attaining $\alpha_k$, the pairing argument used in
\cite{LMST} produces a bipanchromatic coloring with $\beta_k$
colors. Thus, for every $k\leq \pan$ we obtain a lower bound for
$\bipan$, and the largest of these bounds equals $\bipan$.

\begin{corollary}[Merging-based lower bound]\label{cor:pairing}
For an arbitrary hypergraph $H$ and for each integer $k\leq \pan$
we obtain that $\bipan\ge \beta_k$. Furthermore, we have that
$\bipan=\max_{k}\{\beta_k\}.$
\end{corollary}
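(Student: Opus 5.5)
The plan is to prove the two claims separately: the inequality $\bipan\ge\beta_k$ by the merging argument applied at level $k$, and the equality by exhibiting a $k$ with $\beta_k=\bipan$. Throughout I take $k$ to range over $0\le k\le\pan$ (with $\alpha_0=0$ by convention, so $\beta_0=0$ and the bound is trivial there).

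\emph{Lower bound.} Fix $1\le k\le\pan$ and a panchromatic $k$-coloring $c$ attaining $\alpha_k$. Exactly $\alpha_k$ of its colors are unique. If $\alpha_k$ is even, I merge these unique colors in pairs. If $\alpha_k$ is odd, I pair all but one of them and merge the leftover color with some other color.

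\begin{itemize}
\item \emph{Every hyperedge stays panchromatic.} Merging two color classes into one never removes a color from a hyperedge: each new color class contains an old class, and every hyperedge met that old class. So the result is still panchromatic.
\item \emph{No color is unique.} Every merged class has at least two vertices. Non-merged classes were non-unique to begin with.
\item \emph{Count.} The number of colors is $k-\floor{\alpha_k/2}$ in the even case. In the odd case one further merge gives $k-\floor{\alpha_k/2}-1$. In both cases this equals $k-\ceil{\alpha_k/2}=\beta_k$.
\end{itemize}

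The one subtlety is the odd case, where a second color class must exist. If $\alpha_k=k$, every color is unique, so $|V|=k$. With $k\ge2$ and $\alpha_k$ odd, we have $\alpha_k\ge3$ and there are at least two unique classes to pair, so the leftover can be merged into an already-merged pair. If $k=\alpha_k=1$, then $\beta_1=0$ and the bound is trivial. In every other odd case some non-unique class exists. Hence $\bipan\ge\beta_k$ for all $k$, and therefore $\bipan\ge\max_k\beta_k$.

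\emph{Equality.} Let $m=\bipan$. If $m=0$, then $\beta_0=0=m$. Otherwise a bipanchromatic $m$-coloring exists. It is in particular panchromatic, so $m\le\pan$, and it has zero unique colors, so $\alpha_m=0$ and $\beta_m=m$. This matches the observation $\bipan=\max\{k:\alpha_k=0\}$ in the preliminaries. Combining, $\max_k\beta_k\ge\beta_m=\bipan$, and the maximum is attained at $k=\bipan$.

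The main obstacle is only the bookkeeping in the odd merging case, namely guaranteeing a partner color, and this is handled by the case split above.
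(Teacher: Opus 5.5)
Your proposal is correct and follows the paper's argument. You apply the pairing/merging of unique colors to a panchromatic $k$-coloring attaining $\alpha_k$ to get $\bipan\ge\beta_k$, and you observe that $\alpha_{\bipan}=0$, so $\beta_{\bipan}=\bipan$ attains the maximum. Your explicit case split for odd $\alpha_k$ (when no non-unique partner class exists) is a slightly more careful version of the paper's ``merge it with any other color'' step.
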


\begin{lemma}[Equality cases and counterexample bounds]\label{lem:equality-cases}
\leavevmode

\textup{(i)} If $\alpha\in\{0,1,2,\pan\}$, then $\bipan=\beta$.
In particular, equality holds if $\pan\le3$.

\textup{(ii)} If $\bipan\ne\beta$, then
$3\le\alpha\le\pan-1$ and
$\ceil{\pan/2}\le\beta<\bipan\le\pan-1$.
\end{lemma}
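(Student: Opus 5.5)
The plan is to combine two elementary upper bounds with Corollary~\ref{cor:pairing}, which gives $\beta\le\bipan$ (take $k=\pan$). Both upper bounds come from reading a bipanchromatic coloring as a panchromatic one.

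\emph{Upper bound 1.} Every bipanchromatic $k$-coloring is in particular a panchromatic $k$-coloring, so $\bipan\le\pan$. Moreover, if $\bipan=\pan$, there is a panchromatic $\pan$-coloring with no unique color, which forces $\alpha=0$. Hence $\alpha\ge1$ implies $\bipan\le\pan-1$.

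\emph{Upper bound 2.} The observation after Definition~\ref{def:bipanchromatic} gives $\bipan\le\floor{|V|/2}$.

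For (i) I would go through the four values of $\alpha$.
\begin{itemize}
\item If $\alpha=0$, then $\beta=\pan\ge\bipan\ge\beta$.
\item If $\alpha\in\{1,2\}$, then $\ceil{\alpha/2}=1$, so $\beta=\pan-1$. Upper bound 1 gives $\bipan\le\pan-1$, and with $\bipan\ge\beta$ we get equality.
\item If $\alpha=\pan$, this is the only case needing a small idea, and I expect it to be the main obstacle. Take a $\pan$-coloring attaining $\alpha$. It has $\pan$ colors, all of them unique, and every vertex receives a color, so $|V|=\pan$. Upper bound 2 then gives $\bipan\le\floor{\pan/2}=\pan-\ceil{\pan/2}=\beta$.
\end{itemize}
For the ``in particular'' claim, note that $0\le\alpha\le\pan$ always holds. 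If $\pan\le3$, then $\alpha\in\{0,1,2,3\}$, and $\alpha=3$ can only occur when $\pan=3=\alpha$. So every value of $\alpha$ is covered by the cases above.

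For (ii), suppose $\bipan\ne\beta$.
\begin{itemize}
\item By the contrapositive of (i), $\alpha\notin\{0,1,2,\pan\}$. Together with $\alpha\le\pan$ this gives $3\le\alpha\le\pan-1$.
\item Since $\alpha\ge1$, Upper bound 1 gives $\bipan\le\pan-1$.
\item Corollary~\ref{cor:pairing} gives $\beta\le\bipan$, and the inequality is strict because $\bipan\ne\beta$.
\item For the lower bound on $\beta$, use $\alpha\le\pan-1$ to get $\beta\ge\pan-\ceil{(\pan-1)/2}$.
\end{itemize}
It remains to check that $\pan-\ceil{(\pan-1)/2}=\ceil{\pan/2}$, splitting on parity:
\begin{itemize}
\item if $\pan=2m$, both sides equal $m$;
\item if $\pan=2m+1$, both sides equal $m+1$.
\end{itemize}
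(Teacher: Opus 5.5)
Your proposal is correct and follows the paper's proof essentially step by step. Both arguments combine the merging bound $\bipan\ge\beta$, the bound $\bipan\le\pan-1$ whenever $\alpha>0$, and $|V|=\pan$ with $\bipan\le\floor{|V|/2}$ in the case $\alpha=\pan$, and then derive (ii) by contraposition from (i). You also spell out the ``in particular'' clause for $\pan\le3$ and the parity identity $\pan-\ceil{(\pan-1)/2}=\ceil{\pan/2}$, both of which the paper leaves implicit.
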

\begin{proof}
\textup{(i)} Corollary~\ref{cor:pairing} gives $\bipan\ge\beta$.
If $\alpha=0$, a $\pan$-coloring is already bipanchromatic,
so $\bipan=\pan=\beta$.
If $\alpha>0$, no $\pan$-coloring is bipanchromatic, so $\bipan\le\pan-1$.
For $\alpha\in\{1,2\}$, $\bipan=\beta$.
If $\alpha=\pan$, then $|V|=\pan$, so $
\bipan\le\floor{|V|/2}
=\floor{\pan/2}
=\beta$, so $\bipan=\beta$.

\textup{(ii)} By \textup{(i)}, $3\le\alpha\le\pan-1$.
Thus $\beta\ge\pan-\ceil{\frac{\pan-1}{2}}=\ceil{\frac{\pan}{2}}$.
Corollary~\ref{cor:pairing} gives $\beta<\bipan$, and $\alpha>0$ gives
$\bipan\le\pan-1$ as above.
\end{proof}

Let $\rho=\bipan/\beta$ denote the actual-to-conjectured ratio.

\begin{theorem}[Sharp ratio bounds]\label{thm:sharp-ratio}\label{cor:factor-two}
If $|V|\ge2$, then $1\le\rho<2$, and the upper bound is
asymptotically sharp. For $\pan\ge2$, the sharper bound $\rho\le
\frac{\pan-1}{\ceil{\pan/2}}$ is attained for every fixed value of
$\pan$.
\end{theorem}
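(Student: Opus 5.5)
The plan is to derive everything from Lemma~\ref{lem:equality-cases} and then exhibit a family of examples. For the lower bound, Corollary~\ref{cor:pairing} gives $\bipan\ge\beta$. Since $|V|\ge2$, we have $\pan\ge1$ and $\beta\ge\pan-\ceil{\pan/2}\ge0$. I would check the degenerate case directly. If $\beta=0$, then $\pan=1$ and $\alpha=1$, so $|V|=1$, a contradiction. Hence $\beta\ge1$, the ratio $\rho$ is well defined, and $\rho\ge1$.

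For the upper bound I split into two cases.
\begin{itemize}
\item If $\bipan=\beta$, then $\rho=1<2$, and $\rho\le\frac{\pan-1}{\ceil{\pan/2}}$ holds as soon as $\pan\ge2$, since $\pan-1\ge\ceil{\pan/2}$.
\item If $\bipan\ne\beta$, Lemma~\ref{lem:equality-cases}(ii) gives $\ceil{\pan/2}\le\beta$ and $\bipan\le\pan-1$. Hence
\[
\rho\le\frac{\pan-1}{\ceil{\pan/2}}\le\frac{\pan-1}{\pan/2}<2 .
\]
\end{itemize}
This gives both the strict bound and the sharper bound for $\pan\ge2$.

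For attainment I need, for each fixed $\pan=p$, a hypergraph with $\bipan=p-1$ and $\beta=\ceil{p/2}$. The latter requires $\alpha\in\{p-1,p-2\}$ with $\ceil{\alpha/2}=\floor{p/2}$. For $p\le3$ the bound equals $1$ and is attained trivially, since $\rho=1$ there, so assume $p\ge4$.

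The sunflower $\mathcal S_n$ from Theorem~\ref{thm:sunflower-parameters} has $\pan=n+1$, $\alpha=n$ and $\bipan=n=\pan-1$. Then $\beta=n+1-\ceil{n/2}=\ceil{(n+1)/2}$, wait: $n+1-\ceil{n/2}=\floor{n/2}+1=\ceil{(n+1)/2}=\ceil{\pan/2}$. So $\rho=\frac{\pan-1}{\ceil{\pan/2}}$ exactly, for every $\pan=n+1\ge4$.

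Asymptotic sharpness then follows: $\rho(\mathcal S_n)=\frac{n}{\floor{n/2}+1}\to2$ as $n\to\infty$.

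The main obstacle is mild. It is only the small-$\pan$ bookkeeping, namely that attainment for $\pan\in\{2,3\}$ means $\rho=1$, which any hypergraph with that $\pan$ achieves by Lemma~\ref{lem:equality-cases}(i), and $\pan=1$ is excluded from the sharper claim.
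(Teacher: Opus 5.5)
Your proposal is correct and follows essentially the same route as the paper. You get $\rho\ge1$ from the merging bound, you get $\rho\le(\pan-1)/\ceil{\pan/2}<2$ from Lemma~\ref{lem:equality-cases}(ii), and you use the sunflowers $\mathcal S_n$ of Theorem~\ref{thm:sunflower-parameters} for attainment and asymptotic sharpness. Your extra checks fill in details the paper leaves implicit: that $\beta\ge1$ when $|V|\ge2$, so $\rho$ is defined, and that for $\pan\in\{2,3\}$ the bound equals $1$ and is trivially attained. You should remove the stray ``wait:'' from the computation of $\beta(\mathcal S_n)$.
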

\begin{proof} By Lemma~\ref{lem:equality-cases} it holds that
$\rho\geq 1$, with equality precisely when
 (\ref{eq:conjecture})
is true. Suppose that $\pan\ge2$. If $\rho\neq1$, then
Lemma~\ref{lem:equality-cases}(ii) gives the upper bound. For
$n\ge3$, Theorem~\ref{thm:sunflower-parameters} gives
$\rho(\mathcal{S}_n)=\frac{\pan-1}{\ceil{\pan/2}}$. The sharp
bounds are strictly below $2$ and tend to $2$ as $\pan\to\infty$.
\end{proof}
\emph{The order} of a hypergraph $H$ is the number of its vertices
$|V(H)|$. We show \(\mathcal{S}_{2n+1}\) has minimum order among
hypergraphs satisfying \(\epsilon\ge n\).

\begin{theorem}[Excess-based minimum order]\label{thm:vertex-minimum}
If a hypergraph $H=(V,\mathcal E)$ satisfies $  \epsilon(H) \geq
n$, then $|V|\geq |V(\mathcal{S}_{2n+1})| = 4n + 2$. Moreover, if
$ \epsilon(H)= n$ and $|V|=4n+2$, then, after removing hyperedges
that properly contain other hyperedges,
$H\cong\mathcal{S}_{2n+1}$.
\end{theorem}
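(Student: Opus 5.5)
Write $N=|V|$ and fix a panchromatic $\pan$-coloring attaining $\alpha$. The plan is to squeeze $\epsilon$ between the crude upper bounds on $\bipan$ and a vertex count, and then read off the structure from the equality cases. Since $\epsilon\ge n\ge1$, we have $\bipan\ne\beta$, so Lemma~\ref{lem:equality-cases}(ii) gives $3\le\alpha\le\pan-1$ and $\bipan\le\pan-1$.

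\emph{Step 1 (a lower bound on $\alpha$).} From $\bipan\le\pan-1$ we get
$\epsilon=\bipan-\pan+\ceil{\alpha/2}\le\ceil{\alpha/2}-1$, so $\alpha\ge 2n+1$.

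\emph{Step 2 (the order bound).} Use $\bipan\le\floor{N/2}$ and $\pan\ge\alpha+1$. These give
$\epsilon\le\floor{N/2}-\alpha-1+\ceil{\alpha/2}=\floor{N/2}-1-\floor{\alpha/2}$.
By Step~1, $\floor{\alpha/2}\ge n$, so $n\le\floor{N/2}-1-n$. Hence $\floor{N/2}\ge 2n+1$ and $N\ge4n+2$. Theorem~\ref{thm:sunflower-parameters} already gives the matching example $\mathcal S_{2n+1}$ with $\epsilon=n$ on $4n+2$ vertices.

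\emph{Step 3 (equality forces the parameters).} Now let $\epsilon=n$ and $N=4n+2$. Step~2 gives $n\le 2n-\floor{\alpha/2}$, so $\alpha\le 2n+1$, and with Step~1 we get $\alpha=2n+1$. Every inequality in Step~2 is then tight, so $\pan=\alpha+1=2n+2$ and $\bipan=2n+1$.

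In the chosen $\pan$-coloring there are $2n+1$ unique vertices. Call this set $S$. Exactly one further color remains, and it occupies all of $P:=V\setminus S$, with $|P|=2n+1$. Panchromaticity means every hyperedge equals $S\cup Q$ for some nonempty $Q\subseteq P$. In particular no hyperedge avoids $S$, and $|S|=|P|=2n+1$ already matches the order of $\mathcal S_{2n+1}$.

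\emph{Step 4 (the key step: structure).} I expect this step to be the main obstacle, and I would handle it through the minimality of $\alpha$. Take $p\in P$ and suppose no hyperedge equals $S\cup\{p\}$. This includes the case where $p$ lies in no hyperedge. Recolor $p$ with the color of some $s\in S$. Every hyperedge still sees the $P$-color, because any hyperedge containing $p$ also contains another vertex of $P$. The $S$-colors are untouched, so the coloring is still a panchromatic $\pan$-coloring. But the color of $s$ is no longer unique, so this coloring has only $\alpha-1$ unique colors, a contradiction. Hence $S\cup\{p\}\in\mathcal E$ for every $p\in P$.

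Every hyperedge $S\cup Q$ contains $S\cup\{p\}$ for any $p\in Q$. Therefore the hyperedges that do not properly contain another hyperedge are exactly the $2n+1$ sets $S\cup\{p\}$. After removing the others, $H$ is precisely $\mathcal S_{2n+1}$.

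As a sanity check, this reduced hypergraph has the same $\pan$, $\alpha$ and $\bipan$, consistent with Theorem~\ref{thm:sunflower-parameters}. That theorem gives $\pan=2n+2$, $\alpha=2n+1$ and $\bipan=2n+1$ for $\mathcal S_{2n+1}$, matching Step~3.
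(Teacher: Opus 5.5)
Your proof is correct and takes essentially the paper's approach. Both arguments use the same three inequalities ($\bipan\le\pan-1$, $\alpha\le\pan-1$, $\bipan\le\floor{|V|/2}$) to get the order bound and to force $(\pan,\alpha,\bipan)=(2n+2,2n+1,2n+1)$ at equality, and both use the same recoloring argument against the minimality of $\alpha$ to obtain every petal $S\cup\{p\}$. The only differences are that you pin down $\alpha\ge 2n+1$ first where the paper chains through $\beta\ge\ceil{\pan/2}$ to get $\bipan\ge2n+1$, and that your Step~4 is more explicit; there you should add that the $P$-color stays non-unique after recoloring because $|P|=2n+1\ge3$.
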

\begin{proof}
 By
Lemma~\ref{lem:equality-cases}(ii), $\beta\ge\ceil{\pan/2}$ and
$\bipan\le\pan-1$. Hence
\begin{equation}\label{eq:chain}
n\le\epsilon =\bipan-\beta \le\bipan-\ceil{\pan/2}
\le\floor{(\bipan-1)/2}.
\end{equation}
Thus, $\bipan\ge2n+1$, and consequently
$|V|\ge2\bipan\ge4n+2=|V(\mathcal{S}_{2n+1})|$.

Suppose that $ \epsilon(H)= n$ and $|V|=4n+2$. It follows from
\eqref{eq:chain} that \( \bipan=2n+1\) and \(
\beta=\ceil{\pan/2}=n+1 \). Since $\pan \neq \bipan$ ($\epsilon >
0$), we obtain $\pan=2n+2$. Moreover, \( \ceil{\frac{\alpha}2}
=\pan-\beta =n+1 \). Since $\alpha\neq\pan$ it follows that
$\alpha=2n+1$.

Consider a panchromatic $(\pan, \alpha)$-coloring of $H$, and let
$S$ be the set of its $\alpha$ uniquely colored vertices. All
remaining $|V|-\alpha$ vertices have a common color $C$. For every
vertex $v$ of color $C$, there is a hyperedge $S\sqcup\{v\}$.
Otherwise, if every hyperedge containing $v$ also contained another vertex of
$v$'s color, recoloring $v$ with an existing unique color preserves
panchromaticity and contradicts the supposed $\alpha$. Removing hyperedges that properly contain other
hyperedges leaves $\mathcal S_{2\epsilon+1}$.
\end{proof}

Every counterexample has at least three hyperedges. Choose a
$\pan$-coloring attaining $\alpha$. If every non-unique color
occurred twice, then $|V|=2\pan-\alpha$, so
$\bipan\le\floor{|V|/2}=\beta$, a contradiction.
Thus some color $C$ occurs on at least three vertices.
Since $\alpha>0$, the same recoloring argument shows that
each such vertex is the only $C$-colored vertex in some
hyperedge. These hyperedges are distinct.

Proposition~\ref{prop:three-edge-excess} attains this
three-hyperedge minimum with $\epsilon(\Delta_n)=n$ for every
$n\ge1$. However, Theorem~\ref{thm:vertex-minimum} implies
that every hypergraph of minimum order satisfying
$\epsilon(H)\ge n$ has at least $2n+1$ hyperedges.
Thus, for $n\ge2$, the minimum numbers of vertices and
hyperedges cannot be attained simultaneously.

%% file: nondetermination.tex
\section{Nondetermination}\label{sec:two-parameters}

By Corollary~\ref{cor:pairing} and Lemma~\ref{lem:equality-cases}, if
 $\pan>\alpha\geq3$, then $
\pan-\ceil{\alpha/2}\leq\bipan<\pan.$ We will show that every
integer value of $\bipan$ in this range can be realized for the
given pair $(\pan,\alpha)$. Thus, $\bipan$ cannot be expressed as
a function of $\pan$ and $\alpha$ alone.

\begin{proposition}[Realization of prescribed
coloring parameters]\label{prop:fixed-parameters} Let
$\targetpan$, $\targetalpha$, $\targetbipan$ be integers
satisfying \begin{equation}\label{e:prescribedineq}
3\le\targetalpha<\targetpan \text{ and }
 \targetpan-\ceil{\targetalpha/2}
 \le\targetbipan<\targetpan.
\end{equation} Then there exists a hypergraph satisfying
$\pan=\targetpan, \alpha=\targetalpha, \bipan=\targetbipan$.
\end{proposition}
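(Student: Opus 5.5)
The plan is to generalize the sunflower and the construction $\Delta_n$ of Proposition~\ref{prop:three-edge-excess} to a \emph{core-and-petals} family with enough free parameters to tune $\bipan$ while keeping $\pan$ and $\alpha$ fixed. Put $m=\targetpan-\targetalpha\ge1$ and $j=\targetpan-\targetbipan$, so \eqref{e:prescribedineq} says $1\le j\le\ceil{\targetalpha/2}$. Take a core $S$ with $|S|=\targetalpha$ and pairwise disjoint petals $P_1,\dots,P_t$ with $|P_i|=m$ and $t\ge2$. Let the hyperedges be $e_i=S\sqcup P_i$, possibly with further hyperedges added below to cut down the options in the bipanchromatic step.

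First I would fix $\pan$ and $\alpha$ exactly as in Theorem~\ref{thm:sunflower-parameters}. Coloring $S$ injectively and each petal with the same $m$ extra colors gives $\pan\ge\targetalpha+m=\targetpan$. Since $|e_i|=\targetpan$, this is also an upper bound. In a $\pan$-coloring every color occurs exactly once in each $e_i$, so a color used on $S$ cannot reappear in any petal and is unique. A color missing from $S$ occurs in every petal, hence at least $t\ge2$ times. Therefore $\alpha=|S|=\targetalpha$. Any auxiliary hyperedges I add must preserve this rigidity: each added hyperedge should have size at least $\targetpan$ and contain $S$.

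Second, I would compute $\bipan$ by a counting argument. In a bipanchromatic $k$-coloring, split the colors into three types:
\begin{itemize}
\item $x$ colors with at least two vertices in $S$;
\item $y$ colors with exactly one vertex in $S$ and some partner in a petal;
\item $z$ colors absent from $S$, each of which needs a vertex in every petal.
\end{itemize}
This gives $2x+y\le\targetalpha$, $z\le m$, and $y+tz\le tm$, since the petals contain $tm$ vertices in total. Optimizing yields
\[
\bipan=\max_{0\le y\le\targetalpha}\Bigl(\floor{(\targetalpha-y)/2}+y+m-\ceil{y/t}\Bigr),
\]
and the maximum is attained at $y=\targetalpha$, giving $\bipan=\targetpan-\ceil{\targetalpha/t}$. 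Matching colorings are easy to exhibit: pair every $s\in S$ with a petal vertex, spreading these pairs evenly over the petals. For $t=2$ this gives $\beta$, and for $t\ge\targetalpha$ it gives $\targetpan-1$, consistent with Lemma~\ref{lem:equality-cases}(ii).

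The main obstacle is hitting every deficit $j\in[1,\ceil{\targetalpha/2}]$. The values $\ceil{\targetalpha/t}$ skip some integers; for example, $\targetalpha=13$ never gives $6$. My first attempt would be petals of unequal sizes: one petal of size $m$, which keeps $\pan=\targetpan$, and others of size $m+r_i$. The extra vertices then absorb partners for $S$-colors at no cost. The danger is that they also let $\pan$-colorings reuse $S$-colors and lower $\alpha$. I would compensate by enlarging the core and recomputing $\alpha=|S|-\sum r_i$. Failing that, I would use the interpolating option of making only a prescribed subset $T\subseteq S$ cheaply pairable. Concretely, with $t=2$ on $S\setminus T$ and many petals seen by $T$ (realized by extra hyperedges $S\sqcup Q$ with suitable $Q$), the deficit becomes $\ceil{(\targetalpha-|T|)/2}+1$ or similar. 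Choosing $|T|$ would then sweep all values of $j$. Checking that these extra hyperedges do not break $\alpha=\targetalpha$, and verifying the resulting optimization exactly, is where I expect the real work to lie.
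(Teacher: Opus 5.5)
\textbf{There is a genuine gap.} Your core-and-petals analysis only realizes the deficits $p-b=\lceil a/t\rceil$. You correctly note that these values skip integers. The step that would reach every $b$ in the range is left as a sketch: you say yourself that checking $\alpha=a$ and redoing the optimization for the modified hypergraph is ``where the real work lies.'' That deferred step is the whole content of the proposition, so the proposal does not prove it.

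Neither repair works as stated:
\begin{itemize}
\item Enlarging the core to absorb the extra petal vertices also enlarges $S\sqcup P_1$, which raises $\chi_P$ unless $P_1$ shrinks. That changes the entire computation.
\item While every hyperedge contains $S$, all core vertices are interchangeable. So a ``cheaply pairable subset $T\subseteq S$'' can only mean $|T|$ cheap partners outside $S$. With pairwise disjoint petals, every petal vertex is expensive, because pairing it with a core vertex costs petal colors.
\end{itemize}
Separately, your formula $\chi_B=p-\lceil a/t\rceil$ ignores the constraint $y\le tm$. For $t=3$, $m=1$, $a=9$ it predicts $7$, but $|V|=12$ forces $\chi_B\le 6$.

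\textbf{The missing idea is to let the petals overlap.} The paper takes $e_0=S\sqcup U\sqcup\{x_0\}$ and $e_{n+1}=S\sqcup W\sqcup\{x_{n+1}\}$ with $|U|=|W|=p-a-1$. It then adds pendant hyperedges $e_i=S\sqcup U\sqcup\{x_i\}$ for $i\in[n]$, where $n=\max\{0,2b-(2p-a)\}$.
\begin{itemize}
\item All hyperedges have size $p$ and contain $S$, so $\chi_P=p$.
\item The core colors are unique. Every other color appears in both of the disjoint sets $e_0\setminus S$ and $e_{n+1}\setminus S$, so $\alpha=a$.
\item The set $X=\{x_0,\dots,x_{n+1}\}$ can be one color class in a $\chi_P$-coloring, so each $x_i$ is a free partner for a core vertex. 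Keep the $p-a-1$ pair colors on $U\cup W$, pair $X$ with $n+2\le a$ core vertices, and merge the rest of $S$ in pairs. This gives exactly $b$ colors.
\item The resulting deficit is $1+\lceil (a-n-2)/2\rceil$, which is exactly your guessed formula with $|T|=n+2$.
\end{itemize}

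\textbf{The upper bound needs no optimization.} The order is tuned so that $|V|=\max\{2p-a,2b\}\in\{2b,2b+1\}$. Then the trivial bound $\chi_B\le\lfloor |V|/2\rfloor=b$ is already tight.
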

\begin{proof}
 Start with two hyperedges $e_0$ and $e_{n+1}$,
each of size $p$, whose intersection is a set $S$ of size $a$, as
shown in Figure~\ref{fig:parameter-construction}(a). Choose
vertices $x_0\in e_0\setminus S$ and $x_{n+1}\in e_{n+1}\setminus
S$, and write \[ e_0=S\sqcup U\sqcup\{x_0\},\qquad e_{n+1}=S\sqcup
W\sqcup\{x_{n+1}\}, \] where $|U|=|W|=p-a-1$.

\noindent Since any hypergraph admitting a bipanchromatic
$b$-coloring has at least $2b$ vertices, we set
\begin{equation}\label{eq:n-new-vertices} n=\max\{0,\,2b-(2p-a)\}
\end{equation} and add $n$ new vertices $x_1,\ldots,x_n$.
 Let $
X=\{x_0,x_1,\ldots,x_n,x_{n+1}\}$.
 We then add $n$
new hyperedges $$e_i=\big(e_0\setminus \{x_0\}\big)\cup\{ x_i\},
\text{ for all } i\in[n],$$ see in
Figure~\ref{fig:parameter-construction}(b). This gives a
$p$-uniform hypergraph $H=(V,\mathcal E)$, where
\begin{equation}\label{eq:range-construction}
  V=S\sqcup U\sqcup W\sqcup X,\qquad
  \mathcal E=\{e_0,\ldots,e_{n+1}\}.
\end{equation}
By~\eqref{e:prescribedineq} and the definition of $n$,
$|V|\in\{2\targetbipan,2\targetbipan+1\}$. We define a
$p$-panchromatic coloring of $H$ as follows:
\begin{enumerate} \item[(1)] The vertices of $S$ receive
pairwise distinct colors.
\item[(2)] The vertices of $U$ and $W$ are paired,
 with each pair receiving its own color.
 This introduces $p-a-1$ additional colors.
\item[(3)] Introduce one new color. Assign that same color
to every vertex of $X$.
\end{enumerate}
 See Figure~\ref{fig:parameter-construction}(b) for details.
  Every hyperedge
contains all $\targetpan$ colors, so $\pan=\targetpan$. Moreover,
exactly the colors assigned to the vertices of $S$ are unique in
any $(\pan,\alpha)$-coloring, and hence $\alpha=\targetalpha$.

To obtain a bipanchromatic $b$-coloring of $H$
(Figure~\ref{fig:parameter-construction}(c)), we retain the colors
assigned to the vertices of $U\cup W$. Since $|X|\leq|S|$, we can
pair the vertices of $X$ with a subset of $S$. We then merge the colors of the remaining vertices of $S$ in pairs, as indicated in the figure. We recolor any leftover vertex with an existing pair's color. The
resulting coloring is bipanchromatic and uses $b$ colors. Thus,
$\bipan=\targetbipan$.
\end{proof}

The construction realizes the prescribed parameters using at most
$\alpha$ hyperedges, each of size $\pan$. Together with
Lemma~\ref{lem:equality-cases}(i), this shows that $\pan$ and
$\alpha$ uniquely determine $\bipan$ only when
$\alpha\in\{0,1,2,\pan\}$.

\begin{figure}
\centering \ParameterConstructionDrawing
\caption{Proposition~\ref{prop:fixed-parameters} at
$(\targetpan,\targetalpha,\targetbipan)=(8,6,6)$ and $n=2$.
$S=\{s_1,\ldots,s_6\}$, $U=\{u\}$ and $W=\{w\}$. The contours
represent the hyperedges $e_i=S\sqcup\{u,x_i\}$,
$e_{3}=S\sqcup\{w,x_{3}\}$. (a) The initial construction. (b)
Adding new vertices and hyperedges, and 8-coloring. (c)
Bipanchromatic coloring by pairing.}
\label{fig:parameter-construction}
\end{figure}

The minimum number of vertices in a hypergraph
$H$ with prescribed
feasible parameters $(\pan,\alpha,\bipan)$ is
$\max\{2\pan-\alpha,\,2\bipan\}$. Indeed, a $\pan$-coloring attaining $\alpha$ gives
$|V|\ge\alpha+2(\pan-\alpha)=2\pan-\alpha$.
A bipanchromatic $\bipan$-coloring also gives $|V|\ge2\bipan$.

For $3\le\alpha<\pan$, the construction in
Proposition~\ref{prop:fixed-parameters} has exactly
$\max\{2\pan-\alpha,2\bipan\}$ vertices.
In the remaining cases, Lemma~\ref{lem:equality-cases}(i)
gives $\bipan=\beta$.
Take two sets of size $\pan$ intersecting in $\alpha$ vertices,
use them as hyperedges, and take their union as the vertex set.
When $\alpha=\pan$, this gives just one hyperedge.
The panchromatic number is $\pan$, and every maximum
panchromatic coloring has unique colors precisely on the
$\alpha$ common vertices. The same lemma therefore gives
bipanchromatic number $\beta$.
This construction has
$2\pan-\alpha=\max\{2\pan-\alpha,2\beta\}$ vertices,
attaining the lower bound.

%% file: decidability.tex
\section{Characterization of equality}\label{sec:zero-excess}
Here we discuss conditions characterizing $\bipan=\beta$ (equivalently
$\epsilon=0$) for an arbitrary hypergraph.
\begin{proposition}[Zero excess characterization]
\label{prop:equality-order} Assume that $H$ is a hypergraph with
$\alpha\in\mathbb{N}\setminus\{1,2,4\}$. Then $\bipan=\beta$ if
and only if $ \beta=\floor{\frac{|V|}{2}}$.

\end{proposition}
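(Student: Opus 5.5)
The backward implication is immediate. The forward implication I would prove by contraposition: assuming $\beta<\floor{|V|/2}$, I would build a bipanchromatic coloring with $\beta+1$ colors. I read $\mathbb N$ as the positive integers, so the hypothesis means $\alpha=3$ or $\alpha\ge5$.

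\emph{Backward direction.} If $\beta=\floor{|V|/2}$, then Corollary~\ref{cor:pairing} and the bound $\bipan\le\floor{|V|/2}$ give $\beta\le\bipan\le\floor{|V|/2}=\beta$. Hence $\bipan=\beta$.

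\emph{Setup for the forward direction.} Fix a $\pan$-coloring attaining $\alpha$, and let $S$ be its set of uniquely colored vertices. The basic structural fact is that each $s\in S$ lies in \emph{every} hyperedge, since each hyperedge contains the color of $s$ and $s$ is its only vertex. The non-unique colors form $\pan-\alpha$ classes, each of size at least $2$. Define the surplus $\sigma=|V|-\alpha-2(\pan-\alpha)$. The assumption $|V|\ge 2\beta+2=2\pan-2\ceil{\alpha/2}+2$ gives
\[
\sigma\ \ge\ 2+\alpha-2\ceil{\alpha/2},
\]
so $\sigma\ge1$ when $\alpha$ is odd and $\sigma\ge2$ when $\alpha$ is even. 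Consequently, some class $C$ has at least $3$ vertices. When $\alpha$ is even, either one class has at least $4$ vertices or two classes have at least $3$ each.

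\emph{Key gadget.} Let $s_1,\dots,s_t\in S$ and let $C$ be a non-unique class with $|C|\ge t$. Partition $C$ into $t$ nonempty parts $A_1,\dots,A_t$, and recolor so that the new classes are $\{s_i\}\cup A_i$ for $i\in[t]$. Each new class contains some $s_i$, so it meets every hyperedge. Every other class is untouched, so the coloring stays panchromatic. The $t+1$ old colors (those of $s_1,\dots,s_t$ and $C$) become $t$ colors, so we lose only one color while eliminating $t$ unique colors. Ordinary pairing of two unique colors is the same operation with $t=2$, losing one color per two uniques.

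\emph{Odd $\alpha\ge3$.} Apply the gadget with $t=3$ to three vertices of $S$ and a class with $|C|\ge3$. Pair the remaining $\alpha-3$ unique colors, an even number. The total loss is $1+(\alpha-3)/2=(\alpha-1)/2$, which gives $\pan-(\alpha-1)/2=\beta+1$ colors.

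\emph{Even $\alpha$.}
\begin{itemize}
\item If some class has $|C|\ge4$ (needs $\alpha\ge4$): apply the gadget with $t=4$ and pair the other $\alpha-4$ uniques. The loss is $1+(\alpha-4)/2=\alpha/2-1$, giving $\beta+1$ colors.
\item Otherwise two distinct classes $C,C'$ have size at least $3$. Apply the $t=3$ gadget twice, using disjoint triples of $S$; this needs $\alpha\ge6$. Pair the other $\alpha-6$ uniques. The loss is $2+(\alpha-6)/2=\alpha/2-1$, again giving $\beta+1$ colors.
\end{itemize}
In every case $\bipan\ge\beta+1$, which completes the contrapositive.

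\emph{Where the exclusions come from.} The main obstacle is the even case with exactly two classes of size $3$. This is precisely where $\alpha=4$ fails: there are not enough unique vertices for two disjoint triples. It is also why that value is excluded and must be handled separately by a matching argument. For $\alpha\in\{1,2\}$ no triple exists at all, so those values are excluded as well.
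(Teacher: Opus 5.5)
Your proof is correct and is essentially the paper's argument. The paper assumes $\bipan=\beta$ and uses the same recolorings: a class of size at least $3$ (odd $\alpha$) or $4$ (even $\alpha$) absorbed by that many unique colors, or two classes of size $3$ absorbed by six unique colors, with the remaining uniques paired. From this it bounds the class sizes and hence $|V|$, whereas you run the same moves in contrapositive form through your surplus count. Your explicit reading of $\mathbb{N}$ as the positive integers is what the paper implicitly needs as well, since the forward direction fails for $\alpha=0$: the path on six vertices has $\bipan=\beta=2<3=\floor{|V|/2}$.
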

\begin{proof}
Assume that $\bipan=\beta$ and consider a $(\pan,\alpha)$-coloring
of $H$.
 Let $n=3$ if $\alpha$
is odd and $n=4$ otherwise. If a color $C$ occurs at least $n$
times, recolor all vertices of color $C$ with $n$ existing unique colors, using each at least once. Merge the remaining
$\alpha-n$ uniquely colored vertices in pairs. This yields a
$\beta+1$ bipanchromatic coloring of $H$, contradicting
$\bipan=\beta$. Therefore, if $\alpha$ is odd, then every color
occurs at most twice, and if $\alpha$ is even, every color occurs
at most thrice. If two distinct colors each occur three times,
recolor their six vertices with six colors that are used only once
(since $\alpha\geq6$), and then merge the remaining uniquely
colored vertices in pairs, again obtaining a coloring with
$\beta+1$ colors. Thus, at most one color occurs thrice, so $
|V|\in\{2\pan-\alpha,\,2\pan-\alpha+1\}$, and
$\floor{|V|/2}=\beta$.

The converse follows from the definition.
\end{proof}

For a hypergraph $H$ with $\alpha=4$,
the equality $\bipan=\beta$
requires further consideration.
 Consider mutually disjoint sets $|S|=4$,
 $\{v,a,b,x_0,x_1,x_2\}$ and $|P_i|=2$
  for $i=1,2,3$. We define two hypergraphs $H_-$ and $H_+$
  whose hyperedge sets are, respectively
$$\mathcal E_-= \{S\sqcup P_i : i\in[3]\}\textrm{ and }\mathcal E_+=
\{S\sqcup\{v,x_0\},\,
  S\sqcup\{v,x_1\},\,
  S\sqcup\{v,x_2\},\,
  S\sqcup\{a,b\}\}.$$
  Notice that $|V(H_-)|=|V(H_+)|=10$,
  $\pan(H_-)=\pan(H_+)=6$, and $\alpha(H_-)=\alpha(H_+)=4$.
  However, $\beta=\bipan(H_-)=4<|V|/2$ and
  $\beta<\bipan(H_+)=5=|V|/2$.

Therefore, if $\alpha(H)=4$, we need an additional analysis. For a
given hypergraph $H=(V,\mathcal E)$, define a simple graph
$G_H=(V, E)$, where $$E=\{uv: u,v\in V, \{u,v\}\cap e \neq
\varnothing\textrm{ for all }e\in \mathcal{E}\}.$$
\begin{proposition}[Matching reduction]
\label{prop:equality-four} Let $H$ be a hypergraph with
$\alpha(H)=4$ and let $\nu$ be the size of a maximum matching of
$G_H$. Then $\bipan=\beta$ if and only if $ |V|+\nu\le3\pan-4$.
\end{proposition}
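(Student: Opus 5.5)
Throughout, put $k=\pan-1$. Since $\alpha=4>0$, Lemma~\ref{lem:equality-cases} gives $\bipan\le\pan-1=k$, and by definition $\beta=\pan-2=k-1$. So $\bipan=\beta$ holds if and only if $H$ has no bipanchromatic $k$-coloring. The plan is to prove that such a coloring exists exactly when $|V|+\nu\ge 3\pan-3$.

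Two reformulations will be used. First, a color class of a panchromatic coloring is a transversal, meaning a set meeting every hyperedge. By definition, the edges of $G_H$ are exactly the two-element transversals. Second, fix a $(\pan,4)$-coloring with unique vertices $S=\{s_1,\dots,s_4\}$. Each $s_i$ is the only vertex of its color, so it lies in every hyperedge. Hence every pair containing a vertex of $S$ is an edge of $G_H$. Every other class $C_1,\dots,C_{\pan-4}$ is a transversal with $|C_j|\ge2$.

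\emph{Necessity.} I will show that a bipanchromatic $k$-coloring forces $|V|+\nu\ge3\pan-3$. Let $m$ be the number of its color classes of size $2$. These classes are disjoint edges of $G_H$, so $m\le\nu$. Every other class has at least $3$ vertices. Therefore
\[
|V|\ge 2m+3(k-m)=3k-m\ge 3k-\nu=3\pan-3-\nu .
\]
This gives the direction ``$|V|+\nu\le3\pan-4\Rightarrow\bipan=\beta$''.

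\emph{Sufficiency.} Assume $|V|+\nu\ge3\pan-3$; the goal is to build $k$ disjoint transversals of size at least $2$. Leftover vertices can then be added to any class, since a superset of a transversal is still a transversal. I would first dispose of easy cases with the recoloring step from the proof of Proposition~\ref{prop:equality-order}. If some $C_j$ has at least $4$ vertices, recolor them with the four unique colors, using each at least once. This yields $\pan-1$ classes, and it remains to check that each class is still a transversal. Otherwise every $|C_j|\in\{2,3\}$. Let $r$ be the number of classes of size $3$. Then $|V|=2\pan-4+r$, and the hypothesis becomes $\nu\ge\pan+1-r$.

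Next I would combine this bound with $\nu\le\floor{|V|/2}$. This leaves $r\ge2$, together with a matching that is nearly perfect. In this remaining case the plan is to choose the $k$ classes as a mixture of matching edges and triples. Concretely, take a maximum matching $M$. Keep those $C_j$ of size $3$ that $M$ leaves untouched, and use the vertices of $S$ as universal partners. Any unmatched non-$S$ vertex can be paired with an $s_i$, so it becomes a size-$2$ transversal; two unmatched $s_i$ can also be paired with each other. A counting argument should then give $\nu'$ pairs and $t$ triples with $\nu'+t\ge k$.

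The main obstacle is this exchange step. The matching edges need not be compatible with the classes $C_j$: a single edge of $M$ may use one vertex from each of two different triples, destroying both. I would try an alternating-path argument. Among maximum matchings, choose one that minimizes the number of triples $C_j$ it meets, or that maximizes the number of $S$-vertices it covers. I would then show that each triple broken by $M$ is compensated by a matching edge, so the total number of classes is still at least $\nu+(|V|-2\nu)/3\ge k$. This is exactly the rearrangement of $|V|+\nu\ge3k$.
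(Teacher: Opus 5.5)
There is a genuine gap in your sufficiency direction. Your necessity argument is correct and is exactly the paper's: classes of size $2$ are disjoint edges of $G_H$, so $|V|\ge 3(\pan-1)-\nu$.

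The gap is the case where every non-unique class has size $2$ or $3$ (you correctly derive $\nu\ge\pan+1-r$ and $r\ge2$). At that point you write ``I would try an alternating-path argument'' and ``I would then show that each triple broken by $M$ is compensated'', but you never carry this out. The target you set is also stronger than anything available. You start from a maximum matching $M$ of $G_H$ and aim for $\nu+(|V|-2\nu)/3$ classes, which requires grouping the $M$-unmatched vertices into transversal triples. Those vertices form an independent set of $G_H$, and nothing in the hypotheses makes them split into transversals. A global maximum matching can also pair vertices of many different triples $C_j$. Finally, your plan never uses that $4$ is the \emph{minimum} number of unique colors, and that minimality is genuinely needed here.

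The missing idea is to look not at $M$ but at the matching $M_2$ formed by the size-$2$ classes, inside $G_H[V\setminus S]$. A matching of $G_H$ has at most four edges meeting $S$, so your bound gives $\nu(G_H[V\setminus S])\ge\nu-4\ge\pan-3-r=|M_2|+1$. Berge's lemma then yields a single $M_2$-augmenting path $P$ in $G_H[V\setminus S]$. Its interior lies in size-$2$ classes and its endpoints $u,v$ lie in size-$3$ classes, so only those two triples are disturbed.

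\emph{Same triple.} If $u,v$ lie in one triple $\{u,v,w\}$, use the edges of $M_2\triangle P$ and the other triples as classes, and add $w$ to the class of $s_1$. This is a $\pan$-coloring with only three unique colors, contradicting $\alpha=4$.

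\emph{Different triples.} Otherwise, send the four remaining vertices of the two broken triples one each to the classes of $s_1,\dots,s_4$. This gives $4+(|M_2|+1)+(r-2)=\pan-1$ transversal classes, each of size at least $2$.

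Your idea of using $S$ to absorb leftover vertices is the right ingredient, applied after one augmentation rather than to a whole maximum matching. This is the paper's proof, stated there contrapositively: if $\bipan=\beta$, then $M_2$ is a maximum matching of $G_H[V\setminus S]$, so $\nu\le|M_2|+4$ and $|V|+\nu\le3\pan-4$.
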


\begin{proof}
Since $\alpha=4$ and $\beta=\pan-2$, then
$\bipan\in\{\pan-2,\pan-1\}$. Consider any bipanchromatic
$k$-coloring.
Each color occurring exactly twice determines an edge of \(G_H\). These edges are vertex-disjoint, so they form a matching in \(G_H\). Therefore, we have at most $\nu$ colors used exactly two times. Consequently,
$|V|\ge2\nu + 3(k-\nu)$, so $k\le\bipan\le\floor{(|V|+\nu)/3}$.

If $|V|+\nu\le3\pan-4$, then $\bipan\le \pan-2=\beta$, and
therefore $\epsilon=0$.

For the converse, assume $\epsilon=0$ and choose a panchromatic
$\pan$-coloring attaining $\alpha=4$.

If some color $C$ occurs at least four times, we can recolor all
vertices of color $C$ with the four colors that are used only
once. This produces a bipanchromatic coloring with $\pan-1$
colors, contradicting $\epsilon=0$.

Therefore, every color is used at most three times.
 Let $\mathcal C_k$ denote
the number of colors used exactly $k$ times. Then,
$\pan=4+\mathcal C_2+\mathcal C_3$, and $|V|=4+2\mathcal
C_2+3\mathcal C_3.$

The pairs corresponding to the $\mathcal{C}_2$
 color classes must form a maximum matching in
$G_H[V\setminus S]$, where $S$ is the set of vertices receiving the
four unique colors. Otherwise, an augmenting
path would yield a larger matching, whose pairs could be assigned
distinct colors in a panchromatic coloring of $H$ (the pairs
intersect every hyperedge). The endpoints $u$ and $v$
 of such an
augmenting path belong to three-vertex color classes $\mathcal
C_3$. Reassign the colors so that each matching pair forms its own
color class. If $u$ and $v$ belong to the same three-vertex color
class, assign the color of one of the four uniquely colored
vertices to the third vertex of that class. This produces a
$\pan$-coloring with three uniquely colored vertices, a
contradiction. If $u$ and $v$ belong to different three-vertex
color classes, assign the four unique colors to their other four
vertices. This produces a bipanchromatic coloring with $\pan-1$
colors, again a contradiction, since $\bipan=\beta=\pan-2$. Thus,
$\nu\le \mathcal C_2+4$, and thus $|V|+\nu\le(4+2\mathcal
C_2+3\mathcal C_3)+(\mathcal C_2+4)=3\pan-4.$
\end{proof}
Given $H$ and promises $\pan$ and $\alpha$, $\epsilon=0$ (or
$\bipan=\beta$) is decidable in polynomial time.

%% file: improved-bounds.tex
\section{Order-aware improved bounds}\label{sec:vca-bounds}

In this section, we introduce another lower bound $L$ for the
bipanchromatic number, based on our construction, and show that
$\beta\leq L\leq\bipan$. Thus, $L$ improves the lower bound
$\beta$.

Consider a hypergraph $H=(V,\mathcal{E})$ with $|V|\ge2$ and
$\alpha>0$.
 If $\alpha\ge |V|/2$, then $ \bipan=\floor{|V|/2}$. Indeed,
 the unique vertices can be paired with the
  non-unique vertices, and any remaining vertices can
 be assigned colors so that every color used
 occurs at least twice. For the sunflower $\mathcal{S}_n$
(see Theorem~\ref{thm:sunflower-parameters}), where $n\geq3$, we
have $\alpha=|V|/2$ and $\bipan=|V|/2$, while $\bipan/\beta\to2$
as $n\to\infty$.

We now consider the case $0<\alpha<|V|/2$ and choose a
panchromatic $\pan$-coloring of $H$ with $\alpha$ unique colors.
Let $m=\pan-\alpha$ denote the number of colors used more than
once. Suppose that color $i$ is used on $c_i$ vertices, where
$$c_1\geq c_2\geq\cdots\geq c_m>1.$$
Observe that $c_1+ c_2+\cdots+ c_m=|V|-\alpha$.

 Let $d\in[m]$ be the smallest index such that
 $c_1+c_2+\cdots+c_d\geq \alpha$.
Such an integer exists because $|V|-\alpha>\alpha$.

Recolor all vertices in the first $d$ color classes with the $\alpha$
unique colors so that each of these colors is used at least twice.
This yields a bipanchromatic coloring of $H$ with $L_d=\pan-d$
colors.

Since $c_i\geq2$ for every $i$, we have $d\leq\ceil{\alpha/2}$.
Consequently, $\bipan\geq L_d=\pan-d\ge\beta$.

To obtain a bound for $d$, write Euclidean division of
$|V|-\alpha$ by $m=\pan-\alpha$ as:
\[
|V|-\alpha=qm+r,\quad\]
\[ q=\floor{\frac{|V|-\alpha}{\pan-\alpha}}\geq2,
\quad 0\le r<m.
\]
Define
\begin{equation}\label{eq:vca-d}
D=\max\left\{\ceil{\frac{\alpha}{q+1}},\,
\ceil{\frac{\alpha-r}{q}}\right\},\qquad L=\pan-D.
\end{equation}
Unlike $L_d$, which uses the chosen coloring, $L$ depends only
on $|V|$, $\pan$, and $\alpha$.

\begin{proposition}[Least class recoloring]\label{thm:vca-bounds}
Assume that the parameters of a hypergraph are defined as above.
If $0<\alpha<|V|/2$, then
\[
d\le D\le\ceil{\frac{\alpha}{2}},\qquad
\beta\le L\le L_d\le\bipan.
\]
\end{proposition}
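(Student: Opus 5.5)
The plan is to prove the chain of inequalities one link at a time. The links $L\le L_d$ and $\beta\le L$ follow at once from $d\le D$ and $D\le\ceil{\alpha/2}$, since $L=\pan-D$, $L_d=\pan-d$ and $\beta=\pan-\ceil{\alpha/2}$. The link $L_d\le\bipan$ was already established in the discussion preceding the statement: the recoloring of the first $d$ classes with the $\alpha$ unique colors gives a bipanchromatic coloring with $\pan-d$ colors. So the whole content is the two inequalities $d\le D\le\ceil{\alpha/2}$.

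For $D\le\ceil{\alpha/2}$ I will use $q\ge2$, which holds because every $c_i\ge2$ gives $|V|-\alpha\ge 2m$. Then $\frac{\alpha}{q+1}\le\frac{\alpha}{3}\le\frac{\alpha}{2}$ and $\frac{\alpha-r}{q}\le\frac{\alpha}{2}$. Taking ceilings preserves both inequalities, so $D\le\ceil{\alpha/2}$.

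The main step is $d\le D$. Since $d$ is the least index with $c_1+\cdots+c_d\ge\alpha$, it suffices to show $c_1+\cdots+c_D\ge\alpha$ whenever $D\le m$. If $D>m$, then $d\le m<D$ trivially. I will first prove an auxiliary bound: for every $j\in[m]$,
\[
c_1+\cdots+c_j\ \ge\ jq+\min\{j,r\}.
\]
The argument splits on $c_j$.
\begin{itemize}
\item If $c_j\ge q+1$, then by monotonicity all of $c_1,\dots,c_j$ are at least $q+1$. The sum is therefore at least $j(q+1)\ge jq+\min\{j,r\}$.
\item If $c_j\le q$, then $c_{j+1},\dots,c_m\le q$. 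Hence the top $j$ classes carry at least $(qm+r)-(m-j)q=jq+r\ge jq+\min\{j,r\}$.
\end{itemize}

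Applying the auxiliary bound with $j=D$ also needs two cases.
\begin{itemize}
\item If $D\le r$, the bound gives at least $D(q+1)$, which is $\ge\alpha$ because $D\ge\ceil{\alpha/(q+1)}$.
\item If $D>r$, the bound gives at least $Dq+r$, which is $\ge\alpha$ because $D\ge\ceil{(\alpha-r)/q}$.
\end{itemize}
Either way $c_1+\cdots+c_D\ge\alpha$, so $d\le D$.

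I expect the auxiliary lower bound on the sum of the $j$ largest classes to be the only nonroutine point. This is exactly where the two terms in the definition of $D$ come from. The case split on whether $c_j$ exceeds the quotient $q$ should handle it cleanly.
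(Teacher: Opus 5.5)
Your proof is correct and follows the paper's argument essentially step for step. You use the same auxiliary bound $c_1+\cdots+c_j\ge jq+\min\{j,r\}$ with the same split on whether $c_j\ge q+1$, and the same use of $q\ge2$ for $D\le\ceil{\alpha/2}$. The only cosmetic difference is that you verify $c_1+\cdots+c_D\ge\alpha$ directly via the cases $D\le r$ and $D>r$, whereas the paper observes that the balanced sequence $\widehat c$ attains the bound with equality and has $D$ as its least qualifying index. Your case $D>m$ is actually vacuous, since $mq+r=|V|-\alpha>\alpha$ forces $D\le m$.
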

\begin{proof}
For every $j\in[m]$, if $c_j\ge q+1$, the first $j$ classes
contain at least $j(q+1)$ vertices. Otherwise, the last $m-j$
classes contain at most $(m-j)q$ vertices, so the first $j$
contain at least $jq+r$. Thus
\[
\sum_{i=1}^j c_i\ge jq+\min\{j,r\}.
\]
The balanced sequence
\[
\widehat c_1=\cdots=\widehat c_r=q+1,\qquad
\widehat c_{r+1}=\cdots=\widehat c_m=q
\]
attains equality for every $j$. Its first $j$ terms sum to at
least $\alpha$ exactly when both $j(q+1)\ge\alpha$ and
$jq+r\ge\alpha$, so $D$ is its least such index. Therefore
$d\le D$. Since $q\ge2$ and $r\ge0$, the formula for $D$ gives
$D\le\ceil{\alpha/2}$, proving the stated bounds.
\end{proof}

\begin{remark}
If the three-hyperedge family described
in~\cite[Proposition~1]{LMSTChordal} satisfies $0<\alpha<|V|/2$,
then its parameters considered in Proposition~\ref{thm:vca-bounds} are
$q=3$, $r=0$, and $D=\alpha/3$. Thus
$L=\pan-\alpha/3=\bipan$. The boundary case $2\alpha=|V|$
is covered by the observation at the start of this section. So,
our order-aware bound gives the exact bipanchromatic number
throughout that family.
\end{remark}

Let
 $U=\min\{\pan-1,\floor{|V|/2}\}$. If $0<\alpha<|V|/2$, then
$\beta\le L\le L_d\le\bipan\le U$, and $U$ is an upper bound for
$\bipan$.

Now, we examine the relations between $U$, $\bipan$, and $L$.
\begin{theorem}[Improved bound ratio]\label{thm:vca-ratio}
The bounds $L$ and $U$ always satisfy
\begin{equation}\label{eq:vca-integer-ratio}
5U\le6L+1,\qquad 1\le\frac{\bipan}{L}\le\frac{U}{L}\le\frac54.
\end{equation}
In particular, $L\le3$ implies $U=L$. For all feasible inputs,
\begin{equation}\label{eq:vca-asymptotic-upper}
\frac{\bipan}{L}\le\frac65+\frac{1}{5L}.
\end{equation}
\end{theorem}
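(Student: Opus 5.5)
The plan is to prove the single integer inequality $5U\le6L+1$ in the regime $0<\alpha<|V|/2$, where $L$ is defined. Everything else then follows formally.

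\emph{Deriving the remaining claims.} Proposition~\ref{thm:vca-bounds} and the definition of $U$ give $L\le\bipan\le U$. Dividing $5U\le 6L+1$ by $5L$ gives
\[
\frac{\bipan}{L}\le\frac{U}{L}\le\frac65+\frac1{5L},
\]
which is \eqref{eq:vca-asymptotic-upper}. If $L\le3$, then $6L+1<5(L+1)$, so $U\le L$ and hence $U=L$. If $L\ge4$, then $\frac65+\frac1{5L}\le\frac65+\frac1{20}=\frac54$. In both cases $U/L\le5/4$, which is \eqref{eq:vca-integer-ratio}.

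\emph{Setting up the main inequality.} Write $a=\alpha$, $m=\pan-a$ and $N=|V|=a+qm+r$, with $q\ge2$ and $0\le r<m$. The hypothesis $a<N/2$ becomes $a<qm+r$. The definition of $U$ gives two usable upper bounds, $U\le \pan-1=a+m-1$ and $U\le\floor{N/2}$. It therefore suffices to show that, in every case, at least one of the following holds:
\[
6D\le a+m+6
\qquad\text{or}\qquad
5\floor{\tfrac{a+qm+r}{2}}\le 6(a+m-D)+1 .
\]
For the estimates on $D$ I use $\ceil{x/y}\le (x+y-1)/y$ on each of its two defining terms. This gives $D\le (a+q)/(q+1)$ when the first term dominates, and $D\le (a-r+q-1)/q$ when the second does.

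\emph{Case analysis on $q$.} For $q\ge5$ I expect the first bound to suffice. In that case $6D$ is at most about $6a/q+6$. Using $a<qm+r$ to trade $a$ against $m$, this should be at most $a+m+6$. For $q\in\{2,3,4\}$, $D$ is comparatively large (up to about $a/2$), so the bound $U\le\pan-1$ may fail. There I would use $U\le\floor{N/2}$ instead, or a convex combination of the two bounds. For instance, when $q=2$ we have $N=a+2m+r$ and $D=\max\{\ceil{a/3},\ceil{(a-r)/2}\}$. The target then becomes a linear inequality in $a,m,r$, and it should hold using $r<m$ and $a<2m+r$. Within each $q$ I split according to which term of $D$ is the maximum. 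I also track residues of $a$ and $r$ modulo $q$ or $q+1$, so that the ceilings and the floor of $N/2$ are handled exactly, since the $+1$ in $6L+1$ leaves no slack.

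\emph{Expected obstacle.} The main difficulty is the small-$q$ cases, especially $q=2,3$ near the boundary $a\approx qm+r$. There the two upper bounds on $U$ nearly coincide and the inequality is tight. This is where sharpness ($5/4$ attained) should come from, so parity bookkeeping must be done carefully. If the direct casework becomes unwieldy, my fallback is the following. For fixed $q$ and $r$, both sides are piecewise linear in $(a,m)$. So I would verify the inequality at the extreme points of each linear region and handle the finitely many residue classes by a short check.
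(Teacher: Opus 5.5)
Your reduction to the single inequality $5U\le 6L+1$, and your derivation of everything else from it, are correct and match the end of the paper's proof. That covers the ratio bounds, $U=L$ for $L\le3$, and $\frac65+\frac1{5L}\le\frac54$ for $L\ge4$.

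The gap is that $5U\le6L+1$ itself, which is the whole content of the theorem, is never proved. Your $q\ge5$ case can be completed. Both ceiling estimates give $6D\le\alpha+m+6$ (using $\alpha<5m+r$ when $q=5$), so $U\le\pan-1$ suffices there. For $q\in\{2,3,4\}$, however, you only assert that the linear inequalities ``should hold'', and at the granularity you propose they do not. Take $q=2$ with $D=\ceil{\alpha/3}$ strictly the maximum:
\begin{itemize}
\item At $(\alpha,m,r)=(1,4,3)$ we have $|V|=12$, $\pan=5$, $L=4$. The bound $U\le\floor{|V|/2}=6$ gives $30>25=6L+1$.
\item At $(\alpha,m,r)=(25,10,9)$ we have $|V|=54$, $\pan=35$, $D=9$, $L=26$. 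The bound $U\le\pan-1=34$ gives $170>157$.
\end{itemize}
So splitting by $q$ and by which term of $D$ dominates, with one upper bound on $U$ per piece, cannot close the argument. You would need further splits, for instance by the size of $\alpha$ relative to $m$, or a weighted bound such as $5U\le3(\pan-1)+|V|$. None of this is carried out. The extreme-point fallback is also not a proof as stated, because the regions in $(\alpha,m)$ are unbounded.

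The idea you are missing, which the paper uses, treats all $q$ at once and needs no parity bookkeeping. It plays both upper bounds on $U$ against each other through $g=U-L$:
\begin{itemize}
\item From $U\le\pan-1=L+D-1$ you get $g\le D-1$. Minimality of $D$ for the balanced sequence $\widehat c$ gives $\widehat c_D(D-1)\le\alpha-1$, hence $\alpha\ge\widehat c_D\,g+1$.
\item From $2U\le|V|$ and $|V|\le2\alpha-1+\widehat c_D(m-D+1)$ you get $(\widehat c_D-2)(m-D+1)\ge2g-1$.
\end{itemize}
Since $L=\alpha+m-D$, the cases $\widehat c_D=2,3,4,\ge5$ each yield $L\ge5g-1$, which is exactly $5U\le6L+1$.
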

\begin{proof}
Use the balanced sequence $\widehat c_1,\ldots,\widehat c_m$ from
Proposition~\ref{thm:vca-bounds}. These are not
necessarily the actual color-class sizes of $H$. Their sum is
$|V|-\alpha$, and $D$ is the least index for which the first $D$
terms sum to at least $\alpha$.

Set $g=U-L\ge 0$. We prove that $L\ge 5g-1$.
Since $U\le\pan-1=L+D-1$, we obtain $g\le D-1$. By minimality of
$D$, it follows that
$$ \widehat c_D(D-1)\le\widehat c_1+\cdots+\widehat c_{D-1}\le\alpha-1.$$
Consequently,
\begin{equation}\label{eq:vca-gap-alpha}
\alpha\ge\widehat c_Dg+1.
\end{equation}
Since $\sum_{i=1}^{m}\widehat c_i=|V|-\alpha$, and
$\widehat c_i\le\widehat c_D$ for $i\ge D$,
\begin{equation}\label{eq:vca-order-upperbound}
|V| =\alpha+\sum_{i=1}^{D-1}\widehat c_i+\sum_{i=D}^{m}\widehat c_i
\le\alpha+(\alpha-1)+\widehat c_D(m-D+1).
\end{equation}
Since $U=L+g=\alpha+m-D+g$ and
$U\le|V|/2$ (by definition), inequality
\eqref{eq:vca-order-upperbound} yields
\begin{equation}\label{eq:vca-gap-tail}
(\widehat c_D-2)(m-D+1)\ge2g-1.
\end{equation}

We use $L=\alpha+m-D$ and $m-D\ge0$ to consider the following four
cases:
\[
\renewcommand{\arraystretch}{1.3}
\begin{array}{c|c|c|c}
\widehat c_D & \text{From~\eqref{eq:vca-gap-tail}} &
\text{From~\eqref{eq:vca-gap-alpha}}
& \text{Conclusion}\\
\hline 2 & g=0 & \text{--}
& L\ge5g-1\\
3 & m-D\ge2g-2 & \alpha\ge3g+1
& L\ge5g-1\\
4 & m-D\ge g-1 & \alpha\ge4g+1
& L\ge5g\\
\ge5 & \text{--} & \alpha\ge5g+1 & L\ge5g+1
\end{array}
\]

Therefore, $5g\le L+1$, which gives $5U\le6L+1$.

For $L\le3$ we obtain $U=L$. For $L\ge4$,
\[
\frac{U}{L}\le\frac65+\frac{1}{5L}\le\frac54.
\]
\end{proof}

\begin{remark}
The conclusions of Theorem~\ref{thm:vca-ratio} also hold with
$L_d$ in place of $L$. Indeed, $L\le L_d\le\bipan$, so
$5U\le6L+1\le6L_d+1$ and
$\bipan/L_d\le\bipan/L\le5/4$.
\end{remark}

We prove that for fixed parameters $(|V|,\pan,\alpha)$, without
more information, the bound $L$ is the best possible.
\begin{proposition}[Optimality]\label{prop:vca-pointwise}
For any valid parameters $(|V|,\pan,\alpha)$, with
 $0<\alpha<\frac{|V|}{2}$, there exists a hypergraph with
 these prescribed parameters that satisfies
$\bipan=L$.
\end{proposition}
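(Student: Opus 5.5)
The plan is to realize the balanced sequence $\widehat c_1,\ldots,\widehat c_m$ from Proposition~\ref{thm:vca-bounds} as the actual color classes of a \emph{complete transversal} hypergraph. Let $m=\pan-\alpha$, and let $q,r$ be as in~\eqref{eq:vca-d}. Validity of the parameters should mean $|V|\ge 2\pan-\alpha$, i.e.\ $q\ge2$. Take disjoint sets $S$ with $|S|=\alpha$ and $C_1,\ldots,C_m$, where $|C_i|=q+1$ for $i\le r$ and $|C_i|=q$ otherwise. The hyperedges are all sets $S\cup\{y_1,\ldots,y_m\}$ with $y_i\in C_i$. Then $|V|$ has the prescribed value, and every hyperedge has size $\pan$.

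\textbf{Step 1: $\pan$ and $\alpha$.} Coloring $S$ injectively and each $C_i$ with its own new color is panchromatic with $\alpha+m$ colors. Since every hyperedge has exactly $\alpha+m$ vertices, we get $\pan=\alpha+m$. In a panchromatic $\pan$-coloring every hyperedge is rainbow. Any two vertices lying in different parts among $S,C_1,\ldots,C_m$, or both in $S$, share a hyperedge, so they receive distinct colors. Hence each vertex of $S$ has a unique color, and the parts $C_i$ use pairwise disjoint color sets. Counting colors forces each $C_i$ to be monochromatic, and $|C_i|\ge2$. So every $\pan$-coloring has exactly $\alpha$ unique colors, and $\alpha$ is as prescribed.

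\textbf{Step 2: structure of bipanchromatic colorings.} A color class $T$ meets every hyperedge. If $T\cap S=\varnothing$, then $T\supseteq C_i$ for some $i$; otherwise we pick $y_i\in C_i\setminus T$ for each $i$ and obtain a hyperedge avoiding $T$. Fix a bipanchromatic $k$-coloring. The $S$-free color classes contain pairwise distinct full parts, say all parts except $e$ of them. We may assume each such class equals its part, since moving surplus vertices to other classes keeps panchromaticity and keeps every class of size at least two. The remaining colors all meet $S$, so there are at most $\alpha$ of them. They cover $S$ and the vertices of the $e$ remaining parts, each with at least two vertices. Writing $\Sigma_e$ for the total size of those $e$ parts, which is at most the sum of the $e$ largest $\widehat c_i$, we get
\[
k\le (m-e)+\min\Bigl\{\alpha,\ \floor{\tfrac{\alpha+\Sigma_e}{2}}\Bigr\}.
\]

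\textbf{Step 3: the inequality $k\le \alpha+m-D=L$.} This is the step I expect to be the main obstacle, and the plan is to use the minimality of $D$ together with $\widehat c_i\ge q\ge2$.
\begin{itemize}
\item If $e\ge D$, the bound $\min\{\cdot\}\le\alpha$ gives $k\le\alpha+m-D$ directly.
\item If $e\le D-1$, minimality of $D$ gives $\widehat c_1+\cdots+\widehat c_{D-1}\le\alpha-1$. Each of the terms $\widehat c_{e+1},\ldots,\widehat c_{D-1}$ is at least $2$, so
\[
\Sigma_e\le\alpha-1-2(D-1-e)=\alpha-2(D-e)+1.
\]
The floor then yields $\floor{(\alpha+\Sigma_e)/2}\le\alpha-(D-e)$. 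Therefore $k\le m-e+\alpha-D+e=L$.
\end{itemize}
Hence $\bipan\le L$. Combined with $L\le\bipan$ from Proposition~\ref{thm:vca-bounds}, this gives $\bipan=L$. The points needing care are the assumption about surplus vertices in $S$-free classes and the parity in the floor estimate, which is exactly what absorbs the $+1$.
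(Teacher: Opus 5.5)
Your proof is correct, but it differs from the paper's in both the construction and the upper-bound argument.

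\textbf{The paper's route.} It uses only $q+1$ hyperedges: $S\sqcup P_i$ for $i\in[q]$, and $S\sqcup(P_1\setminus R)\sqcup Z$. Here each $P_i$ carries one copy of each of the $m$ repeated colors, and $Z$ repeats the colors of $R\subseteq P_1$. For a bipanchromatic $k$-coloring with $s\le\alpha$ colors meeting $S$, it counts occurrences. Every other color appears in each $P_i$, and those missing $P_1\setminus R$ also appear on $Z$. This yields $q(\pan-k)\ge\alpha-r$ and $(q+1)(\pan-k)\ge\alpha$, which are exactly the two ceilings in the closed form of $D$.

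\textbf{Your route.} You take the complete transversal hypergraph, in which a vertex set meets every hyperedge if and only if it meets $S$ or contains an entire part. Your case analysis on $e$ then uses the description of $D$ as the least index of the balanced sequence (from the proof of Proposition~\ref{thm:vca-bounds}) rather than the formula. The floor absorbs the $+1$, as you say.

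\textbf{What each buys.} Your construction makes a conceptual point. Every hyperedge of any hypergraph on $V$ that admits a panchromatic coloring with these color classes contains one of your hyperedges. Hence your hypergraph has the smallest $\bipan$ among all such hypergraphs, so it is the canonical extremal example. The price is $(q+1)^r q^{m-r}$ hyperedges instead of the paper's $q+1$.

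Finally, your reduction making each $S$-free class equal to its part is harmless but not needed. The classes meeting $S$ cover at most $\alpha+\Sigma_e$ vertices in any case, which already gives your bound.
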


\begin{proof} Compute $q$ and $r$ as before.
Set $m=\pan-\alpha$ and consider pairwise disjoint sets
$S,P_1,\ldots,P_q,Z$ with $|S|=\alpha$, $|P_i|=m$, and $|Z|=r$.
Choose $R\subseteq P_1$ with $|R|=r$.
We define the hypergraph $H=(V,\mathcal{E})$, where
\[
V=S\sqcup\bigsqcup_{i=1}^qP_i\sqcup Z, \qquad \mathcal E=
\{S\sqcup P_i:i\in[q]\} \cup\{S\sqcup(P_1\setminus R)\sqcup Z\}.
\]
Thus $|V|=\alpha+qm+r$ is attained, and every hyperedge has size
$\pan$. Now, we define a coloring of $H$ such that
    $\alpha$ vertices in $S$ receive distinct colors,
    each $P_i$ uses all $m$ remaining colors once, and,
    if $r>0$, each color from $R$ is used exactly once on $Z$. This gives a panchromatic coloring of $H$ with $\pan$
colors. In any panchromatic $\pan$-coloring of $H$, precisely
the vertices in $S$ have unique colors, so $\alpha$ is attained.

Now, we consider a bipanchromatic $k$-coloring of $H$. Assume that
precisely $s\le\alpha$ colors occur on $S$. Each of the other
$k-s$ colors occurs in every $P_i$, so at least $q$ times.
Those also absent from $P_1\setminus R$ must occur on $Z$, so
they occur at least $q+1$ times. At most $m-r$ of the $k-s$
colors occur on $P_1\setminus R$. Therefore
\[
|V|\ge2s+q(k-s),\qquad |V|\ge2s+(q+1)(k-s)-(m-r).
\]
Substituting $s\le\alpha$, $|V|=\alpha+qm+r$, and $\pan=\alpha+m$
gives
\[
q(\pan-k)\ge\alpha-r,\qquad (q+1)(\pan-k)\ge\alpha.
\]
By the definition of $D$, it follows that
\[
\chi_P-k \ge \max\left\{
\left\lceil\frac{\alpha}{q+1}\right\rceil,
\left\lceil\frac{\alpha-r}{q}\right\rceil \right\} =D.
\]
Thus every bipanchromatic coloring has at most $\pan-D=L$
colors. Since also $L\le\bipan$ by
Proposition~\ref{thm:vca-bounds}, we obtain $\bipan=L$.
\end{proof}
Therefore, at every feasible triple in this range, any universally
valid lower bound using only $(|V|,\pan,\alpha)$ is at most $L$.
Since $L$ is itself valid, it is the strongest such lower bound.
\begin{corollary}[Optimal ratio guarantees]\label{cor:vca-optimal}
For $L$, the factor $5/4$ in~\eqref{eq:vca-integer-ratio} is attained. The
constant $6/5$ in~\eqref{eq:vca-asymptotic-upper} is
asymptotically sharp as $L\to\infty$. These are optimal constants
from $|V|,\pan,\alpha$.
\end{corollary}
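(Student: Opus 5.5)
The plan is to feed explicit parameter triples into Proposition~\ref{prop:vca-pointwise}, which always returns a hypergraph with $\bipan=L$. So it suffices to find triples $(|V|,\pan,\alpha)$ with $0<\alpha<|V|/2$ whose hypergraphs have a larger actual $\bipan$. The upper side is then covered by Theorem~\ref{thm:vca-ratio}; we only need witnesses for the lower side. The guide is the case table in the proof of Theorem~\ref{thm:vca-ratio}. Equality $L=5g-1$ arises only in the rows $\widehat c_D\in\{2,3\}$, so the extremal examples should have balanced class size $q=3$. They should also have $m-D=2g-2$ and $\alpha=3g+1$, giving $L=\alpha+m-D=5g-1$ and $U=L+g=6g-1$. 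Then
\[
\frac{U}{L}=\frac{6g-1}{5g-1}=\frac65+\frac{1}{5(5g-1)}=\frac65+\frac1{5L}.
\]
At $g=1$ this gives $5/4$ with $L=4$, and it tends to $6/5$ as $g\to\infty$.

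\textbf{Step 1: the parameters.} Take $\alpha=3g+1$ and $q=3$. Choose $m$ and $r$ so that the least index $D$ with $Dq+\min\{D,r\}\ge\alpha$ satisfies $D=g+1$, and so that $m-D=2g-2$, i.e.\ $m=3g-1$. With $r=0$ we get $D=\lceil (3g+1)/3\rceil=g+1$, as required. Hence $|V|=\alpha+3m=12g-2$, $\pan=\alpha+m=6g$, and $L=6g-(g+1)=5g-1$. Also $\floor{|V|/2}=6g-1=\pan-1$, so $U=6g-1$. The condition $0<\alpha<|V|/2$ becomes $3g+1<6g-1$, which holds for $g\ge1$. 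Check $q=\floor{(|V|-\alpha)/m}=\floor{(9g-3)/(3g-1)}=3$.

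\textbf{Step 2: a hypergraph attaining $U$.} Proposition~\ref{prop:vca-pointwise} yields $\bipan=L$, so it cannot serve as the witness. Instead we need a hypergraph with the same triple and $\bipan=U=6g-1$. This shows the ratio $\bipan/L$ reaches $U/L$, so no bound from $(|V|,\pan,\alpha)$ alone beats these constants. Since $U=\floor{|V|/2}$, we want a perfect pairing into $6g-1$ colors, each pair meeting every hyperedge.

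Mimic the sunflower and the construction of Proposition~\ref{prop:fixed-parameters}. Let $S$ have size $\alpha$. Let the $m=3g-1$ repeated colors have classes of sizes not all equal to $3$: one large class $X$ with $|X|\ge\alpha$, and the rest pairs. Make every vertex of $X$ the only $X$-colored vertex of a hyperedge $S\sqcup(\text{pair reps})\sqcup\{x\}$, exactly as the $x_i$ were handled in Proposition~\ref{prop:fixed-parameters}. Concretely, one class of size $|V|-\alpha-2(m-1)=3g+1=\alpha$ plus $3g-2$ pairs fits. Pair each vertex of $X$ with one vertex of $S$ and keep the other pairs. This gives $\alpha+(m-1)=6g-2$ colors, which is one short of $U$.

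So I would adjust: take $X$ of size $\alpha+2$, $S\cup X$ forming $\alpha+1$ pairs, and $3g-3$ pairs elsewhere. Then verify $\pan$ and $\alpha$ as in Proposition~\ref{prop:fixed-parameters}, namely that the hyperedges force the $S$-colors to be unique. That yields $\bipan=6g-1$.

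\textbf{Main obstacle.} The hard part is this balancing. The witness must keep $\pan=6g$ and $\alpha=3g+1$ exactly, using class sizes that still give $|V|=12g-2$, while admitting a perfect bipanchromatic pairing of $|V|$. If the sizes do not fit for general $g$, I would fall back on an explicit check at $g=1$ ($|V|=10$, $\pan=6$, $\alpha=4$, $L=4$, $U=5$), where $H_+$ from Section~\ref{sec:zero-excess} already has $\bipan=5$ and gives $5/4$. For asymptotic sharpness, ratios $(6g-1)/(5g-1)\to6/5$ suffice, possibly with a blow-up as in Proposition~\ref{prop:three-edge-excess}.

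Finally, optimality of these constants follows by combining the witnesses with Theorem~\ref{thm:vca-ratio} and Proposition~\ref{prop:vca-pointwise}.
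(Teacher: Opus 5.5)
There is a genuine gap, and it comes from an arithmetic slip. Your Step 1 parameters, $(|V|,\pan,\alpha)=(12g-2,6g,3g+1)$ with $q=3$, $r=0$, $D=g+1$, $L=5g-1$, $U=6g-1$, are exactly the paper's. Your first construction in Step 2 is also precisely the paper's witness: a class $X$ of size $3g+1=\alpha$, $3g-2$ pairs $\{u_j,w_j\}$, hyperedges $S\sqcup U\sqcup\{x\}$, and one hyperedge $S\sqcup W\sqcup\{x\}$. This is Proposition~\ref{prop:fixed-parameters} with $(\targetpan,\targetalpha,\targetbipan)=(6g,3g+1,6g-1)$. It adds $3g-1$ vertices $x_i$, so $|V|=12g-2$. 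The slip is in the color count. Pairing $X$ with $S$ gives $\alpha=3g+1$ colors, and the remaining pairs give $m-1=3g-2$. The total is $\alpha+m-1=6g-1=U$, not $6g-2$. You therefore discarded a correct witness.

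The replacement you propose does not work. With $|X|=\alpha+2$ and only $3g-3$ further pairs, the maximum coloring has just $3g-2$ repeated classes. Each hyperedge $S\sqcup(\text{pair reps})\sqcup\{x\}$ then has $6g-1$ vertices, so $\pan=6g-1$ and the triple is no longer the one you fixed. Moreover, $\alpha>0$ forces $\bipan\le\pan-1=6g-2$, so $\bipan=6g-1$ is impossible there.

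Your fallback $H_+$ does give $5/4$; in fact it is the $g=1$ member of the paper's family. The asymptotic claim, however, is left to an unspecified blow-up. The natural blow-up replaces each vertex of $H_+$ by $k$ copies, giving $\alpha=4k$, $m=2k$, $q=3$, $r=0$. Hence $L=6k-\lceil 4k/3\rceil$ and $\bipan/L\le 5k/(6k-\lceil 4k/3\rceil)\to 15/14<6/5$.

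So, as written, sharpness of $6/5$ is not established. Correct the count and keep your first construction for every $g$. This gives $\bipan/L=(6g-1)/(5g-1)=\frac65+\frac{1}{5(5g-1)}$, which is exactly the paper's argument.
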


\begin{proof}
For any $n\in\mathbb{N}$, Proposition~\ref{prop:fixed-parameters}
describes a hypergraph whose parameters $(|V|,\pan,\alpha,\bipan)$ are $(12n-2,6n,3n+1,6n-1)$.  Here we obtain
$q=3$, $r=0$, $D=n+1$, and $L=5n-1$, so
\[
\frac{\bipan}{L} =\frac{6n-1}{5n-1} =\frac65+\frac{1}{5(5n-1)}.
\]
This is $5/4$ at $n=1$ and tends to $6/5$ as $n\to\infty$.
Proposition~\ref{prop:vca-pointwise} confirms optimality.
\end{proof}